\newtheorem{definition}{Definition}[section]
\newtheorem{theorem}[definition]{Theorem}
\newtheorem{prop}[definition]{Proposition}
\newtheorem{cor}[definition]{Corollary}
\newtheorem{examples}[definition]{Examples}
\newtheorem{remark}[definition]{Remark}
\numberwithin{equation}{section}
\newcommand{\C}{\mathbb{C}}
\newcommand{\D}{\mathbb{D}}
\newcommand{\B}{B_{\delta}}
\newcommand{\E}{\mathcal{E}}
\newcommand{\dd}{d_{\mathbb{D}}}
\begin{document}

\title[Schwarz-Pick and von Neumann's inequalities]{Some relations between Schwarz-Pick inequality and von Neumann's inequality}
\author{Kenta Kojin }
\address{
Graduate School of Mathematics, Nagoya University, 
Furocho, Chikusaku, Nagoya, 464-8602, Japan
}
\email{m20016y@math.nagoya-u.ac.jp}
\date{\today}
\keywords{Schwarz-Pick lemma, Carath\'{e}odory pseudo-distance, Schur-Agler class, von Neumann's inequality}
\thanks{This work was supported by JSPS Research Fellowship for Young Scientists (KAKENHI Grant Number JP 23KJ1070)} 
%thanks の {} 内にはもらっている研究費がある時に記入する．
%日付はあまり重要ではないが，version がわからなくならないように書いておくと便利；\today でコンパイルした日が現れる．自分で日にちを書いても良い．

%%%%%%%%%
\begin{abstract}
We study a Schwarz-Pick type inequality for the Schur-Agler class $SA(B_{\delta})$. In our operator theoretical approach, von Neumann's inequality for a class of generic tuples of $2\times 2$ matrices plays an important role rather than holomorphy. In fact, the class $S_{2, gen}(B_{\Delta})$ consisting of functions that satisfy the inequality for those matrices enjoys
\begin{equation*}
\dd(f(z), f(w))\le d_{\Delta}(z, w)
\;\;(z,w\in B_{\Delta}, f\in S_{2, gen}(B_{\Delta})).
\end{equation*}
Here, $d_{\Delta}$ is a function defined by a matrix $\Delta$ of functions. Later, we focus on the case when $\Delta$ is a matrix of holomorphic functions. We use the pseudo-distance $d_{\Delta}$ to give a sufficient condition on a diagonalizable commuting tuple $T$ acting on $\C^2$ for $B_{\Delta}$ to be a complete spectral domain for $T$. We apply this sufficient condition to generalizing von Neumann's inequalities studied by Drury \cite{Dru1983} and by Hartz-Richter-Shalit \cite{HRS}.
%We apply this sufficient condition to prove von Neumann's inequality for pairwise commuting tuple of row contractions. 
%This result is a generalization of von Neumann's inequalities studied by Drury \cite{Dru1983} and Hartz-Richter-Shalit \cite{HRS}.

\end{abstract}

 \maketitle

%%%%%%%%%%%%%%%%intro

\section{Introduction}
Every holomorphic function $f$ from the open unit disk $\D$ into itself satisfies the Schwarz-Pick inequality
\begin{equation*}
\left|\frac{f(z)-f(w)}{1-\overline{f(w)}f(z)}\right|\le\left|\frac{z-w}{1-\overline{w}z}\right|
\end{equation*}
for any pair $z,w\in\D$ \cite[page 5]{Din}. The pseudo-hyperbolic distance $\dd$ on $\D$ is defined by the right-hand side in the above inequality. 
One of the natural generalizations of $\dd$ on a domain $\Omega\subset\C^d$ is the M$\ddot{\mathrm{o}}$bius pseudo-distance $d_{\Omega}$. We remark that a Schwarz-Pick type inequality holds with respect to this pseudo-distance (see section 2). However, the inequality may not be the best estimate for the Schur-Agler class on a polynomial polyhedron $B_{\delta}$, denoted by $SA(B_{\delta})$.
Here, $\delta$ is a matrix of polynomials in $d$-variables and 
\begin{equation*}
B_{\delta}:=\{z\in\C^d\;|\;\|\delta(z)\|<1\}.
\end{equation*}
The Schur-Agler class is a generalization of the Schur class of $\D$ that plays an important role in operator theory and complex analysis (see e.g., \cite{Agl, AMg, AM2015oka, AMY, AT, BH2013, BMV}). Moreover, it is difficult to determine the explicit formula of the M$\ddot{\mathrm{o}}$bius pseudo-distance $d_{\Omega}$. In section 4, we will define a pseudo-distance $d_{\delta}$ on $B_{\delta}$ by
\begin{align*}
d_{\delta}(z,w):=&\left\|(I-\delta(w)\delta(w)^*)^{-\frac{1}{2}}(\delta(z)-\delta(w))\right.\\
&\;\;\;\;\;\;\left.\times(I-\delta(w)^*\delta(z))^{-1}(I-\delta(w)^*\delta(w))^{\frac{1}{2}} \right\|.
\end{align*}
and prove that 
\begin{equation*}
\dd(f(z), f(w))\le d_{\delta}(z,w)\le d_{B_{\delta}}(z,w).
\end{equation*}
holds for any $f\in SA(B_{\delta})$ and $z,w\in B_{\delta}$.
In section 5, this estimate will be shown to be the best for the Schur-Agler class $SA(B_{\delta})$ (see Proposition \ref{prop1}). We will prove this inequality by operator theoretical approach. We emphasize that von Neumann's inequality for a class of generic tuples of $2\times 2$ matrices plays an important role rather than holomorphy. This observation leads us to define a class $S_{2, gen}(B_{\Delta})$ of functions and a pseudo-distance $d_{\Delta}$ on $B_{\Delta}$ by
\begin{align*}
d_{\Delta}(z,w):=&\left\|(I-\Delta(w)\Delta(w)^*)^{-\frac{1}{2}}(\Delta(z)-\Delta(w))\right.\\
&\;\;\;\;\;\;\left.\times(I-\Delta(w)^*\Delta(z))^{-1}(I-\Delta(w)^*\Delta(w))^{\frac{1}{2}} \right\|
\end{align*}
in sections 3 and 4. Here, $\Delta$ is a matrix of functions on a set $E\subset\C^d$ and
\begin{equation*}
B_{\Delta}:=\{z\in E\;|\;\|\Delta(z)\|<1\}.
\end{equation*}
We will also show 
\begin{equation*}
\dd(f(z), f(w))\le d_{\Delta}(z,w)
\end{equation*}
for any $f\in S_{2, gen}(B_{\Delta})$ and $z,w\in B_{\Delta}$ (see Corollary \ref{cor1}). 

In section 5, we will focus on the case when $\Delta$ is a matrix of holomorphic functions and prove the following result (see Theorem \ref{theorem2}):
\begin{theorem}
Let $T$ be a diagonalizable commuting $d$-tuple of operators acting on $\C^2$ with $\sigma(T)=\{z,w\}\subset B_{\Delta}$. If $\|\Delta(T)\|\le 1$ and $d_{\Delta}(z,w)=d_{B_{\Delta}}(z,w)$, then $B_{\Delta}$ is a complete spectral domain for $T$. 
\end{theorem}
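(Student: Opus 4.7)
The plan is to put $T$ into a normal form by a unitary change of basis, reduce the inequality $\|F(T)\|\le 1$ to a block upper-triangular contraction condition depending only on the point-values $F(z)$, $F(w)$, and then use the hypothesis $d_\Delta = d_{B_\Delta}$ to compare this condition for arbitrary holomorphic $F$ with the one already satisfied by $\Delta$.

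Under a unitary conjugation I can write
\[
T_j = \begin{pmatrix} z_j & s(w_j-z_j) \\ 0 & w_j \end{pmatrix},\qquad j=1,\ldots,d,
\]
for a scalar parameter $s\in\C$ encoding the obliqueness of the two joint eigenlines. The analytic functional calculus then gives, for every matrix-valued holomorphic $F : B_\Delta \to M_n$,
\[
F(T)=\begin{pmatrix} F(z) & s(F(w)-F(z)) \\ 0 & F(w) \end{pmatrix},
\]
and the classical block upper-triangular contraction lemma reformulates $\|F(T)\|\le 1$ as
\[
|s|\,\bigl\|(I-F(z)F(z)^*)^{-1/2}\,(F(w)-F(z))\,(I-F(w)^*F(w))^{-1/2}\bigr\|\le 1.
\]
Specialised to $F=\Delta$, the hypothesis $\|\Delta(T)\|\le 1$ is this inequality for $(\Delta(z),\Delta(w))$, which after unpacking the definition of $d_\Delta$ becomes a numerical upper bound on $|s|$ in terms of $d_\Delta(z,w)$.

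For scalar holomorphic $f : B_\Delta \to \D$ the classical Schwarz-Pick estimate $\dd(f(z),f(w))\le d_{B_\Delta}(z,w)$ combined with the equality $d_\Delta=d_{B_\Delta}$ then forces the same numerical upper bound on $|s|$ to guarantee $\|f(T)\|\le 1$, so $B_\Delta$ is already a scalar spectral domain for $T$. The main obstacle is upgrading this to matrix-valued $F$, as required by \emph{complete} spectrality. Since $F(T)$ depends only on $F(z)$, $F(w)$, and $s$, this reduces to a two-point matricial interpolation: one seeks $G\in SA(B_\Delta)\otimes M_n$ with $G(z)=F(z)$ and $G(w)=F(w)$, for then $G(T)=F(T)$ while $\|\Delta(T)\|\le 1$ gives $\|G(T)\|\le 1$ by definition of the Schur-Agler class. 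I anticipate this matricial lift to proceed via the Nevanlinna-Pick theorem for $SA(B_\Delta)$: the scalar identity $d_\Delta=d_{B_\Delta}$ should force the Pick conditions governing two-point interpolation for $SA(B_\Delta)$ and for the full class of bounded holomorphic maps $B_\Delta\to M_n$ to coincide at $\{z,w\}$, which yields the desired $G$.
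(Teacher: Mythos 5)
Your scalar half is sound and in substance coincides with the paper's argument: writing $T$ in the triangular normal form with parameter $s$ (equivalently, recording the angle $\theta_T$ between the joint eigenlines, $|s|=\cot\theta_T$), the block-contraction criterion turns $\|\Delta(T)\|\le 1$ into $d_\Delta(z,w)\le\sin\theta_T$, and then $\dd(f(z),f(w))\le d_{B_\Delta}(z,w)=d_\Delta(z,w)$ together with the monotonicity of $t\mapsto t/\sqrt{1-t^2}$ gives $\|f(T)\|\le 1$ for every $f\in H(B_\Delta,\D)$. (You do use silently the operator identity $\|(I-AA^*)^{-1/2}(B-A)(I-B^*B)^{-1/2}\|=d/\sqrt{1-d^2}$ with $d=\|(I-AA^*)^{-1/2}(B-A)(I-A^*B)^{-1}(I-A^*A)^{1/2}\|$; this is true but should be checked, since $d_\Delta$ is defined by the M\"obius-type expression, not by the block-contraction one.) This is exactly the easy direction of Agler's criterion that the paper imports as \cite[Theorem 8.13]{AMY}, combined with \cite[Lemma 8.6 and (8.11)]{AMY} and Proposition \ref{prop1}.

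The genuine gap is the passage from ``spectral domain'' to ``\emph{complete} spectral domain,'' which is precisely where you stop proving and start anticipating. Your proposed mechanism --- that the scalar equality $d_\Delta=d_{B_\Delta}$ ``should force'' the two-point \emph{matrix-valued} Pick conditions for the Schur--Agler-type class and for $H^\infty(B_\Delta)\otimes M_n$ to coincide --- does not follow from the hypothesis: $d_\Delta=d_{B_\Delta}$ is a statement about scalar two-point interpolation only, and the implication ``scalar Pick bodies agree $\Rightarrow$ matricial Pick bodies agree'' is in general exactly the kind of statement (spectral $\Rightarrow$ complete spectral) that fails or is open for most operators and domains. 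What rescues the argument here is a genuinely nontrivial fact special to diagonalizable tuples on $\C^2$: a spectral domain for such a tuple is automatically a complete spectral domain. The paper closes the gap by citing this as \cite[Proposition 8.21]{AMY}; without that result (or an actual proof of your matricial interpolation claim, e.g.\ via the structure of two-point matrix Pick problems when the underlying space is $\C^2$), your argument establishes only the scalar von Neumann inequality, not complete spectrality. A secondary, smaller issue: the class $SA(B_\Delta)\otimes M_n$ you interpolate into is not defined in the paper for general holomorphic $\Delta$ (only $SA(B_\delta)$ for polynomial $\delta$ is), so the matricial step would also need the matrix-valued analogue of Theorem \ref{Theorem1} to even be formulated.
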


As an application of this theorem, we can prove the following relation between a certain Schwarz-Pick type inequality and von Neumann's inequality (see Corollary \ref{cor3}):

\begin{theorem}
Let $B_{\Delta}$ be a domain in $\C^d$ associated with a matrix of holomorphic functions $\Delta$. Then the following conditions are equivalent:
\begin{enumerate}
\item $d_{\Delta}=d_{B_{\Delta}}$.
\item If a diagonalizable $d$-tuple $T$ of operators acting on $\C^2$ with $\sigma(T)\subset B_{\Delta}$ satisfies $\|\Delta(T)\|\le1$, then $B_{\Delta}$ is a complete spectral domain for $T$.
\end{enumerate}
\end{theorem}

We apply this result to proving von Neumann's inequality for pairwise commuting tuples of $2\times 2$ row contractions. 
This result is a generalization of von Neumann's inequalities studied by Drury \cite{Dru1983} and by Hartz-Richter-Shalit \cite{HRS}.

%%%%%%%%%%%%%%%%%%%%%%%%% Preliminaries

\section{Preliminaries}

We review some notions and basic results on complex geometry and in operator theory. The set of holomorphic mappings from $\Omega_1\subset\C^d$ to $\Omega_2\subset \C^{d'}$ is denoted by $H(\Omega_1, \Omega_2)$. 
Let $\rho_{\D}$ denote the Poincar\'{e} distance on the open unit disk $\D$. Namely, 
\begin{equation*}
\rho_{\D}(z,w)=\tanh^{-1}\left|\frac{z-w}{1-\overline{w}z}\right|\;\;\;\;(z,w\in\D).
\end{equation*}
For a domain $\Omega\subset\C^d$, the Carath\'{e}odory pseudo-distance is defined by
\begin{equation*}
C_{\Omega}(z,w)=\sup_{f\in H(\Omega, \mathbb{D})}{\rho_{\D}(f(z), f(w))}\;\;(z,w\in\Omega).
\end{equation*}

In particular, $\rho_{\D}(f(z),f(w))\le C_{\Omega}(z,w)$ holds for every $f\in H(\Omega, \D)$. Moreover, 
it is well known that the Carath\'{e}odory pseudo-distances form the smallest Schwarz-Pick system (and the Kobayashi pseudo-distances form the largest one); see \cite[Proposition 3.1.7]{Kob} .

For simplicity, we will consider the following reformulation of the Carath\'{e}odory pseudo-distance. Recall that the pseudo-hyperbolic distance on $\D$ is defined by
\begin{equation*}
d_{\D}(z,w)=\left|\frac{z-w}{1-\overline{w}{z}}\right|\;\;(z, w\in \D).
\end{equation*}
Moreover, we define $d_{\D}(\alpha, \alpha)=0$ if $|\alpha|=1$. We will use this definition because all the main classes of functions in this paper contain the constant functions with modulus one. Since their behaviors are trivial, they are not essential. 
If one wants to avoid the use of this definition, it is reasonable to exclude the constant functions with modulus one from all classes. For a domain $\Omega$, the M$\ddot{\mathrm{o}}$bius pseudo-distance $d_{\Omega}$ is defined by
\begin{equation*}
d_{\Omega}(z,w)=\sup_{f\in H(\Omega, \mathbb{D})}{\dd(f(z), f(w))}\;\;(z,w\in\Omega).
\end{equation*}
%Since $\rho_{\D}(z,w) = \displaystyle\ln\frac{1+\dd(z,w)}{1-\dd(z,w)}$ 
%and the function $\ln\displaystyle\frac{1+x}{1-x}$ is strictly increasing on the interval $(-1, 1)$, 
We note that the relation between $d_{\Omega}$ and $C_{\Omega}$ is given by
\begin{equation*}
C_{\Omega}=\tanh^{-1}(d_{\Omega}).
\end{equation*}
Since $\tanh^{-1}(x)$ is strictly increasing on the interval $(-1, 1)$, it is sufficient to consider $d_{\Omega}$ for studying Schwarz-Pick type results.

\begin{examples}\label{examples:2.2}
\upshape
\begin{enumerate}
\item For $\Omega = \D$, the celebrated Schwarz-Pick lemma implies that the M$\ddot{\mathrm{o}}$bius pseudo-distance coincides with the pseudo-hyperbolic distance on $\D$.
\item For any domains $\Omega_1$ and $\Omega_2$, we have 
\begin{equation*}
d_{\Omega_1\times\Omega_2}((z^1, z^2), (w^1, w^2))=\max\{{d_{\Omega_1}(z^1, w^1), d_{\Omega_2}(z^2, w^2)}\}
\end{equation*}
(see \cite[Theorem 4.9.1]{Kob}). In particular,
\begin{equation*}
d_{\D^d}((z^1,\ldots, z^d), (w^1,\ldots,w^d))=\max_{1\le i\le d}\{\dd(z^i, w^i)\}.
\end{equation*}
\end{enumerate}
\end{examples}

%%%%%%%%%%%%%%%%%%% example

Next, we will define the Schur-Agler class. Let $\delta$ be an $s\times r$ matrix of polynomials in $d$-variables. Let
\begin{equation*}
B_{\delta}=\{z\in\C^d\;|\;\|\delta(z)\|<1\}.
\end{equation*}
Here $\|\delta(z)\|$ denotes the operator norm. We fix a separable Hilbert space $H$ and let $CB(H)^d$ be the set of commuting $d$-tuples of bounded operators $T=(T^1,\ldots,T^d)$ on $H$. Ambrozie and Timotin \cite[Lemma 1]{AT} showed that if $T\in CB(H)^d$ satisfies $\|\delta(T)\|<1$, then the Taylor spectrum $\sigma(T)$ is contained in $B_{\delta}$. Therefore, we can consider the norm $\|f(T)\|$ for such $T$ with any $f\in H(B_{\delta},\C)$ via the Taylor functional calculus \cite{Cur, Tay70}. The Schur-Agler class on $B_{\delta}$, $SA(B_{\delta})$, is constructed by holomorphic functions that satisfy a certain von Neumann inequality:
\begin{equation*}
SA(B_{\delta}):=\left\{f\in H(B_{\delta}, \C)\;\middle|\;\|f(T)\|\le1\;\mbox{whenever}\;\|\delta(T)\|<1\right\}.
\end{equation*}

We note that $SA(B_{\delta})$ is independent of the choice of separable Hilbert space $H$ because the Taylor functional calculus respects intertwinings \cite[Theorem 4.5]{Tay70}. It is obvious that $SA(B_{\delta})\subset H(B_{\delta},\overline{\D})$. %We define the strict Schur-Agler class by $SSA(B_{\delta})=SA(B_{\delta})\cap H(B_{\delta}, \D)$. Then, every $f\in SSA(B_{\delta})$ holds
%\begin{equation*}
%\rho(f(z),f(w))\le C_{B_{\delta}}(z,w).
%\end{equation*}

The following result by Ambrozie and Timotin \cite[Theorem 3]{AT} suggested by Agler's observations \cite{Agl} will play an important role in this paper. 

%%%%%%%%%%%%%%%%%%%%%%Theorem

\begin{theorem}$($\cite[Theorem 3]{AT}$)$\label{Ambrozie-Timotin}
Let $S\subset B_{\delta}$ and $\phi:S\rightarrow\C$ be a function. Then the following conditions are equivalent:
\begin{enumerate}
\item There exists an $f\in SA(\B)$ such that $f|_{S}=\phi$.
%\item there exists a $B(\C^s)$-valued positive semidefinite function $\Gamma: S\times S \rightarrow B(\C^s)$ such that
%\begin{equation*}
%1-\phi(x)\overline{\phi(y)}=\mathrm{Tr}((I-\delta(x)\delta(y)^*)\Gamma(x,y)),
%\end{equation*}
\item There exist an auxiliary Hilbert space $\E$ and a unitary operator
\begin{equation*}
U=
\begin{bmatrix}
A & B\\
C & D
\end{bmatrix}:
\begin{bmatrix}
\E\otimes\C^s\\
\C
\end{bmatrix}\rightarrow
\begin{bmatrix}
\E\otimes\C^r\\
\C
\end{bmatrix}
\end{equation*}
such that $\phi$ has a transfer-function realization
\begin{equation*}
\phi(z)=D+C(I_{\E\otimes\C^s}-(I_\E\otimes\delta(z))A)^{-1}(I_\E\otimes\delta(z))B
\end{equation*}
for all $z\in S$.
\end{enumerate}
\end{theorem}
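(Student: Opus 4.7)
The plan is to prove this equivalence in the style of classical Agler realization theorems: direction (2)$\Rightarrow$(1) is a straightforward verification from the transfer-function formula using unitarity of $U$, while direction (1)$\Rightarrow$(2) proceeds in two steps, first deriving an Agler factorization of the Pick kernel $1-\overline{f(w)}f(z)$ and then running a lurking-isometry argument to read off the unitary $U$.

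\emph{Direction (2)$\Rightarrow$(1).} Since $A$ is a corner of the unitary $U$, it is a contraction, and for $z\in B_{\delta}$ we have $\|\delta(z)\|<1$, so $(I_{\E}\otimes\delta(z))A$ has norm strictly less than one and the resolvent $(I-(I_{\E}\otimes\delta(z))A)^{-1}$ converges as a Neumann series. This makes $f(z):=D+C(I-(I_{\E}\otimes\delta(z))A)^{-1}(I_{\E}\otimes\delta(z))B$ a holomorphic extension of $\phi$ to $B_{\delta}$. Writing $U^{*}U=I$ and $UU^{*}=I$ in block form yields the defect identity
\begin{equation*}
1-\overline{f(w)}f(z)=B^{*}(I-A^{*}(I_{\E}\otimes\delta(w))^{*})^{-1}(I-(I_{\E}\otimes\delta(w))^{*}(I_{\E}\otimes\delta(z)))(I-(I_{\E}\otimes\delta(z))A)^{-1}B.
\end{equation*}
Substituting a commuting tuple $T$ with $\|\delta(T)\|<1$ via the Taylor functional calculus and using $I-(I_{\E}\otimes\delta(T))^{*}(I_{\E}\otimes\delta(T))\ge 0$ gives $I-f(T)^{*}f(T)\ge 0$, so $f\in SA(B_{\delta})$.

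\emph{Direction (1)$\Rightarrow$(2).} Step A is to show that any $f\in SA(B_{\delta})$ admits a Hilbert space $\E$ and a function $H:B_{\delta}\to\E\otimes\C^{r}$ with
\begin{equation*}
1-\overline{f(w)}f(z)=\langle H(z),H(w)\rangle-\langle(I_{\E}\otimes\delta(z))H(z),(I_{\E}\otimes\delta(w))H(w)\rangle.
\end{equation*}
I would derive this via Hahn-Banach separation on the convex cone of hermitian kernels of this form: if $1-\overline{f(w)}f(z)$ lay outside the closure of that cone, a separating continuous functional would, via a GNS-type construction, furnish a commuting tuple $T$ and a vector $h$ with $\|\delta(T)h\|\le\|h\|$ but $\|f(T)h\|>\|h\|$, contradicting $f\in SA(B_{\delta})$. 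Step B rearranges the factorization as the identity (for $\xi,\eta\in\C$)
\begin{equation*}
\langle(I_{\E}\otimes\delta(z))H(z)\xi,(I_{\E}\otimes\delta(w))H(w)\eta\rangle+\langle\xi,\eta\rangle=\langle H(z)\xi,H(w)\eta\rangle+\overline{f(w)}f(z)\langle\xi,\eta\rangle,
\end{equation*}
so that the map $H(z)\xi\oplus f(z)\xi\mapsto(I_{\E}\otimes\delta(z))H(z)\xi\oplus\xi$ is a well-defined isometry from a subspace of $(\E\otimes\C^{r})\oplus\C$ into $(\E\otimes\C^{s})\oplus\C$. Extending this isometry to a unitary (enlarging $\E$ if necessary) and taking the adjoint produces $U$ with the required block form; solving $U((I_{\E}\otimes\delta(z))H(z)\xi\oplus\xi)=H(z)\xi\oplus f(z)\xi$ together with the push-through identity $(I_{\E}\otimes\delta(z))(I-A(I_{\E}\otimes\delta(z)))^{-1}=(I-(I_{\E}\otimes\delta(z))A)^{-1}(I_{\E}\otimes\delta(z))$ recovers the transfer-function formula, which restricts to $\phi$ on $S$.

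\emph{Main obstacle.} The crux is Step A. Membership in $SA(B_{\delta})$ is phrased through a vast test class of commuting tuples, and extracting from it the existence of a single concrete positive-kernel factorization is delicate: one must set up the right topology on a space of hermitian kernels so that Hahn-Banach applies and then convert a separating functional into an actual commuting $d$-tuple via a careful GNS/dilation construction. Once the factorization is in hand, Step B is standard bookkeeping.
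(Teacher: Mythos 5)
The paper does not prove this statement at all: it is quoted verbatim from Ambrozie--Timotin \cite[Theorem 3]{AT} and used as a black box, so there is no internal proof to compare against. Your outline reproduces the standard strategy of that reference (and of Agler's original polydisc argument) --- Neumann series plus the unitary defect identity for (2)$\Rightarrow$(1), and cone separation/GNS followed by a lurking isometry for (1)$\Rightarrow$(2) --- and is sound as a sketch; the two points that genuinely need care, and which you correctly flag as the crux, are (i) the closedness of the cone of factorable kernels, which in \cite{AT} is handled by first treating finite subsets (where the cone sits in a finite-dimensional space) and then passing to general $S$ by a compactness/normal-families argument your sketch omits, and (ii) arranging that the tuple $T$ produced from the separating functional satisfies $\|\delta(T)\|<1$ strictly rather than merely $\le 1$, since the definition of $SA(B_{\delta})$ only quantifies over the former.
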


%%%%%%%%%%%%%%%%%%%%%%%endtheorem

In the test-function approach, Ball and Huam$\acute{\mathrm{a}}$n \cite{BH2013} defined generalized Schur-Agler classes and we will use their characterizations in section 4. Here, we recall their definitions only in the scalar-valued case. Let $\mathcal{U}_T$ and $\mathcal{Y}_T$ be two coefficient Hilbert spaces and let $\Psi$ be a collection of functions $\psi$ on the abstract set of points $E$ with values in the space $B(\mathcal{U}_T, \mathcal{Y}_T)$. We say that $\Psi$ is a {\bf collection of test functions} if
\begin{equation*}
\sup\{\|\psi(z)\|\;|\;\psi\in\Psi\} < 1\;\;\text{for each $z\in E$}.
\end{equation*}
For a positive semidefinite function $k:E\times E\rightarrow \C$ , we say that $k$ is $\Psi$-{\bf admissible} if the $\mathbb{C}$-valued function 
\begin{equation*}
\tilde{k}_{X,\psi}(z,w) :=X(w)^*(I-\psi(w)^*\psi(z))X(z)k(z,w)
\end{equation*}
is positive semidefinite on $E\times E$ for each choice of $X:E\rightarrow\mathcal{C}_2(\C, \mathcal{U}_T)$ and $\psi\in\Psi$.
Here, $\mathcal{C}_2(\C, \mathcal{U}_T)$ denotes the space of Hilbert-Schmidt class operator from $\C$ into $\mathcal{U}_T$.
We then say that a function $f:E\rightarrow \C$ is in the $\Psi$-Schur-Agler class $SA_{\Psi}(\C)$ if the $\C$-valued function
\begin{equation*}
\check{k}_{f}(z,w):=(1-f(w)^*f(z))k(z,w)
\end{equation*}
is positive semidefinite on $E\times E$ for each $\Psi$-admissible $k$.

%%%%%%%%%%%%%%%%%%%%%%%%%%%%%%%%%%%%Section2

\section{$S_{2, gen}(B_{\Delta}$) and $S_2(B_{\Delta})$}

%In this section, %we generalize \cite[Proposition 2.6]{AM2015oka} by transfer-function realizations. 
%we define $S_{2,gen}(B_{\Delta})$ that contains $SA(B_{\delta})$ and ball $H_{\delta, \text{gen}}^{\infty}(G_{\delta})$ (this class was studied in \cite{AM2015oka, AMY2019} and the definition of $S_{2,gen}(B_{\Delta})$ is based on it) as special cases.

If $T=(T^1,\ldots,T^d)$ is a commuting $d$-tuple of operators acting on $\C^2$, we say that $T$ is {\bf generic} if there exist 2 linearly independent joint eigenvectors whose corresponding joint eigenvalues are distinct (or equivalently, $T$ is diagonalizable and $\sigma(T)\subset\C^d$ has cardinality 2). Thus, if $T$ is generic with $\sigma(T)=\{z_1, z_2\}$, then there exist 2 linearly independent vectors $v_1$, $v_2\in\C^2$ such that
\begin{equation}\label{def1}
T^rv_j = z_j^rv_j
\end{equation}
with $z_j=(z_j^r)_{1\le r\le d}$ for $j=1, 2$. So, if $f$ is a function on $\{z_1, z_2\}$, then we define $f(T)$ to be a unique operator on $\C^2$ that satisfies
\begin{equation}\label{def2}
f(T)v_j = f(z_j)v_j=f(z_j^1,\ldots,z_j^d)v_j\;\;(j=1, 2).
\end{equation}

Moreover, if $\Delta$ is an $s\times r$ matrix of functions on $\{z_1, z_2\}$, then we define $\Delta(T)\in B(\C^r\otimes\C^2, \C^s\otimes\C^2)$ by
\begin{equation*}
\Delta(T): e_i\otimes v_j\mapsto \Delta(z_j)e_i\otimes v_j
\end{equation*}
for $i = 1,\ldots, r$ and $j = 1,2$, where $\{e_i\}_{i=1}^r$ is the standard basis for $\C^r$.

Let $E$ be a subset in $\C^d$ and let $\Delta$ be an $s\times r$ matrix of functions on $E$. We define
\begin{equation*}
B_{\Delta}:=\{z\in E\;|\;\|\Delta(z)\|<1\}
\end{equation*}
and we always assume that $B_{\Delta}$ is larger than a singleton.
We define a class $S_{2,gen}(B_{\Delta})$ to be the set of functions $f$ on $B_{\Delta}$ such that the norm
\begin{equation*}
\|f\|_{2, gen}:=\sup \{\|f(T)\|\;|\;\text{$T$ is generic with $\sigma(T)\subset B_{\Delta}$ and $\|\Delta(T)\|\le1$}\}.
\end{equation*}
is less than or equal to 1, where the subscript ``2" emphasizes that we are only using $2\times2$ matrices.
Note that $S_{2, gen}(B_{\Delta})$ is a closed set in the pointwise convergence topology. If $E$ is an open set and the components of a matrix $\Delta$ are holomorphic functions, we define 
\begin{equation*}
S_2(B_{\Delta}):=S_{2,gen}(B_{\Delta})\cap H(B_{\Delta}, \C)\subset H(B_{\Delta}, \overline{\D}). 
\end{equation*}
This is the Schur class of the set of generic operators with $\sigma(T)\subset B_{\Delta}$ and $\|\Delta(T)\|\le 1$ \cite{AMY2019} (see also \cite[Section 9]{AMY}). In section 5, we will give a characterization of $\Delta$ such that $S_2(B_{\Delta})$ and $H(B_{\Delta},\overline{\D})$ coincide.

\begin{examples}\upshape

%%%%%%%%%%%%%%%%%%%%%%%%%Examples

(1) In section 4, we will give a characterization functions in $S_{2, gen}(B_\Delta)$ and use it to see $SA(B_{\delta})\subset S_2(B_{\delta})$ in conjunction with Theorem \ref{Ambrozie-Timotin}. In particular, the set of contractive multipliers of the Drury-Arveson space \cite{AMp, Hartz2022} and the Bedea-Beckermann-Crouzeix class \cite[Section 9.6]{AMY} are examples of such functions.

(2) For any diagonal matrix $\Delta$ of holomorphic functions, the closed unit ball of $H_{\Delta, \text{gen}}^{\infty}(B_{\Delta})$ is a subclass of $S_{2, gen}(B_{\Delta})$, where the definition of the closed unit ball of $H_{\Delta, gen}^{\infty}(B_{\Delta})$ is the same as that of $S_{2, gen}(B_{\Delta})$ with replacing $T$ acting on $\C^2$ with $T$ acting on $\C^m$ for arbitrary $m\in\mathbb{N}$. This class was studied in \cite{AM2015oka, AMY2019} and the definition of $S_{2,gen}(B_{\Delta})$ is based on it.

(3) If $d\ge 3$ and $\delta_d(z)=\mathrm{diag}(z^1,\ldots, z^d)$, it is known that $B_{\delta_d}=\D^d$ and $SA(B_{\delta_d})\subsetneq H(\D^d, \overline{\D})$ (see \cite{Agl} or \cite{AT}). However,  $S_2(B_{\delta_d})=H(\D^d, \overline{\D})$ since von Neumann's inequality holds for any $d$-tuple of commuting $2\times 2$ matrices \cite{Dru1983}.
\end{examples}

%%%%%%%%%%%%%%%%%%%%%%%%%%5 end examples

%%%%%%%%%%%%%%%%%%%%%%%%% question from McCarthy

%The following questions are asked by Prof. John Edward McCarthy to the author:

\begin{remark}\label{remark}
%(1) Are there easily stated sufficient conditions one can put on $\Delta$ so that
%$S_{2, gen}(B_{\Delta}) = S_2(B_{\Delta})$?
%(2) When $E=\C$ and $\Delta=[z]$, it is the simplest case: $B_{\Delta}=\D$. In this case, is the function $\overline{\chi}(z):=\overline{z}$ contained in $S_{2, gen}(\D)$?

\upshape (1) %We cannot give a simple sufficient condition for $S_{2, gen}(B_{\Delta})=S_2(B_{\Delta})$ at the present moment because we will give an affirmative answer to question (2). 
In the simplest case that $E=\C$ and $\delta_1=[z]$, i.e., $B_{\delta_1}=\D$, we observe that $S_2(B_{\delta_1})\neq S_{2, gen}(B_{\delta_1})$. In fact, we can see that the function $\overline{\chi}(z):=\overline{z}$ is in $S_{2, gen}(B_{\delta_1})$. More generally,  we can prove that $\|f\|_{2, gen}\le 1$ if and only if $\displaystyle\sup_{z\ne w}\frac{\dd(f(z), f(w))}{\dd(z,w)}\le 1$. Therefore, $S_{2, gen}(B_{\delta_1})$ is a set of Lipschitz continuous-like functions rather than holomorphic functions.

%Let $T$ be a generic operator acting on $\C^2$ with $\sigma(T)=\{z_1, z_2\}\subset\D$ and $\|T\|\le 1$. We fix the standard basis for $\C^2$ and we may assume that $T$ is a $2\times 2$ matrix. By \cite[Proposition 2.1]{Agl1990-2}, $T$ is unitarily equivalent to 
%\begin{equation*}
%S:=
%\begin{bmatrix}
%1&c\\
%0&1
%\end{bmatrix}
%\begin{bmatrix}
%z_1&0\\
%0&z_2
%\end{bmatrix}
%\begin{bmatrix}
%1&-c\\
%0&1
%\end{bmatrix}
%\end{equation*}
%for some real constant $c\ge 0$. Therefore, $\overline{\chi}(T)$ is unitarily equivalent to
%\begin{equation*}
%\begin{bmatrix}
%1&c\\
%0&1
%\end{bmatrix}
%\begin{bmatrix}
%\overline{z_1}&0\\
%0&\overline{z_2}
%\end{bmatrix}
%\begin{bmatrix}
%1&-c\\
%0&1
%\end{bmatrix}.
%\end{equation*}
%Since $I-S^*S\ge 0$, we can prove that $I-(\overline{\chi}(T))^*\overline{\chi}(T)\ge 0$. In fact, its determinant coincides with that of $I-S^*S$. Hence, $\overline{\chi}\in S_{2, gen}(\D)$.

(2) %Since the positivity of the $3\times 3$ Pick matrix implies the analyticity \cite{AHMS, Hin},
Let us consider the classes  $S_{3, gen}(B_{\Delta})$ and $S_3(B_{\Delta})$. The definitions of these classes are the same as those of $S_{2, gen}(B_{\Delta})$ and $S_2(B_{\Delta})$ with replacing $T$ acting on $\C^2$ with $T$ acting on $\C^3$. If $\Delta$ is a diagonal matrix of holomorphic functions or a row matrix of holomorphic functions, then $S_{3, gen}(B_{\Delta})=S_3(B_{\Delta})$ holds. In fact, by the same argument as in the proof of Theorem \ref{Theorem1} (see section 4), \cite[Theorem 3,1(1)$\Rightarrow$(2)]{BH2013} and \cite[Proposition 2.3]{AHMS}, we get analyticity in $S_{3, gen}(B_{\Delta})$. Moreover, if $\delta_d(z)=\mathrm{diag}(z^1, \ldots, z^d)$, then von Neumann's inequality for $d$-tuples of commuting $3\times 3$ contractive matrices \cite{Kne} yields that $S_{3, gen}(B_{\delta_d})=S_3(B_{\delta_d})=H(\D^d, \overline{\D})$.

\end{remark}

%%%%%%%%%%%%%%%%%%%%%end question from mccarthy

%%%%%%%%%%%%%%%%%%%%%%%%%%%%%section4

\section{Schwarz-Pick type result for $S_{2, gen}(B_{\Delta})$}

In this section, we generalize \cite[Proposition 2.6]{AM2015oka} by transfer-function realizations.
We will first obtain a more concrete description of $S_{2, gen}(B_{\Delta})$. More precisely, we will show that every function in $S_{2, gen}(B_{\Delta})$ coincides with a $\Delta$-Schur-Agler function over any 2 points. 
As an application of this fact, for any two points in $B_{\Delta}$, every function in $S_{2,gen}(B_{\Delta})$ is subordinate to some Schur-Agler function on the following classical Cartan domain of type I:
\begin{equation*}
R_{sr}:=\left\{(\zeta^{1, 1},\ldots,\zeta^{s,r})\in\C^{sr}\;\middle|\left\|
\begin{bmatrix}
\zeta^{1,1}&\cdots&\zeta^{1,r}\\
\zeta^{2, 1}&\cdots&\zeta^{2,r}\\
\vdots&\ddots&\vdots\\
\zeta^{s,1}&\cdots&\zeta^{s,r}
\end{bmatrix}\right\|<1\right\}.
\end{equation*}
In this paper, we sometimes identify $(\zeta^{1, 1},\ldots,\zeta^{sr})\in R_{s,r}$ with the contractive $s\times r$ matrix $\begin{bmatrix}
\zeta^{1,1}&\cdots&\zeta^{1,r}\\
\zeta^{2, 1}&\cdots&\zeta^{2,r}\\
\vdots&\ddots&\vdots\\
\zeta^{s,1}&\cdots&\zeta^{s,r}
\end{bmatrix}$.
For any $\Lambda=\{z_1, z_2\}\subset B_{\Delta}$, we denote the $\Psi$-Schur-Agler class associated with the test function $\Delta|_{\Lambda}$ (so $\mathcal{U}_T=\C^r$, 
$\mathcal{Y}_T=\C^s$ and 
$\Psi=\{\Delta|_{\Lambda}\}$) by 
$SA_{\Delta|_{\Lambda}}$.

%%%%%%%%%%%%%%%%%%%%%%%%%%theorem1

\begin{theorem}\label{Theorem1}
Let $f:B_{\Delta}\rightarrow\C$ be a function. Then the following conditions are equivalent:
\begin{enumerate}
\item $f\in S_{2, gen}(B_{\Delta})$.
\item $f\in SA_{\Delta|_{\Lambda}}$ with any $\Lambda=\{z_1, z_2\}\subset B_{\Delta}$.
\item For each $\Lambda=\{z_1, z_2\}\subset B_{\Delta}$,  there exist an auxiliary Hilbert space $\E$ and a unitary operator
\begin{equation*}
U=
\begin{bmatrix}
A & B\\
C & D
\end{bmatrix}:
\begin{bmatrix}
\E\otimes\C^s\\
\C
\end{bmatrix}\rightarrow
\begin{bmatrix}
\E\otimes\C^r\\
\C
\end{bmatrix}
\end{equation*}
such that $f$ has a transfer-function realization
\begin{equation*}\label{realization}
f(z_i)=D+C(I_{\E\otimes\C^s}-(I_\E\otimes\Delta(z_i))A)^{-1}(I_\E\otimes\Delta(z_i))B
\end{equation*}
for $i=1, 2$.
\item For each $\Lambda=\{z_1, z_2\}\subset B_{\Delta}$, 
there exists a Schur-Agler function $F\in SA(R_{sr})$ such that 
\begin{equation}\label{subord}
f(z_i)=F(\Delta(z_i))
\end{equation}
for $i=1,2$.
\end{enumerate}
\end{theorem}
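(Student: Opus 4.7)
The plan is to organize the proof around the equivalences $(4)\Leftrightarrow(3)\Leftrightarrow(2)$, which follow from standard realization theorems, together with $(1)\Leftrightarrow(3)$, which requires an operator-theoretic matching between generic tuples and admissible kernels.

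For $(4)\Leftrightarrow(3)$, I would first observe that the classical Cartan domain satisfies $R_{sr}=B_{\iota}$, where $\iota$ is the $s\times r$ matrix of coordinate polynomials. Applying Theorem \ref{Ambrozie-Timotin} to $F\in SA(R_{sr})$ on the two-point set $\{\Delta(z_1),\Delta(z_2)\}\subset R_{sr}$ yields a realization of $F$ on these two points, and substituting $Z\mapsto \Delta(z_i)$ in the realization formula gives the realization in (3) for $f$ on $\Lambda$. The converse extends the realization in (3) to all of $R_{sr}$ and then invokes Theorem \ref{Ambrozie-Timotin} in the opposite direction to produce $F$. For $(3)\Leftrightarrow(2)$, I would apply the Ball--Huam\'an realization theorem \cite[Theorem 3.1]{BH2013} to the single test function $\Psi=\{\Delta|_{\Lambda}\}$ on the two-point set $\Lambda$: their result says that membership in $SA_{\Delta|_{\Lambda}}$ is equivalent to the existence of the unitary transfer-function realization described in (3).

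For $(3)\Rightarrow(1)$, given a generic $T$ on $\C^2$ with $\sigma(T)=\{z_1,z_2\}$ and $\|\Delta(T)\|\le 1$, I would take $\Lambda=\sigma(T)$ and invoke (3) to produce a unitary $U$ and a realization of $f$ on $\Lambda$. Substituting the operator $I_{\E}\otimes\Delta(T)$ into the realization formula (replacing $T$ by $rT$ for $r<1$ if needed to guarantee invertibility of $I-(I_{\E}\otimes\Delta(rT))(A\otimes I_{\C^2})$, then passing to the limit $r\to 1$) gives an operator on $\C^2$ that agrees with $f(T)$ from (\ref{def2}) on the joint eigenvectors $v_1,v_2$. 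The classical Agler defect identity, using $U^{*}U=I$, then yields $\|f(T)\|\le 1$.

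The main obstacle is $(1)\Rightarrow(3)$. By $(3)\Leftrightarrow(2)$ it suffices to show $(1)\Rightarrow(2)$, i.e., that $(1-\overline{f(z_j)}f(z_i))k(z_i,z_j)$ is positive semidefinite on $\Lambda$ for every $\{\Delta|_{\Lambda}\}$-admissible $k$. The plan is to set up a dictionary between such admissible kernels $k$ on the two-point set $\Lambda$ and generic commuting tuples $T$ on $\C^2$ with $\sigma(T)=\Lambda$ and $\|\Delta(T)\|\le 1$: admissibility of $k$ reduces to a tensor-twisted $2\times 2$ positivity condition involving $\Delta(z_1),\Delta(z_2)$, which is precisely the condition for a Cholesky-type factorization $k(z_i,z_j)=\langle v_j,v_i\rangle$ to yield a basis $(v_1,v_2)$ of $\C^2$ producing, via (\ref{def1}), a generic tuple $T$ with $\|\Delta(T)\|\le 1$; conversely, the Gram matrix of the eigenvectors of any such tuple is admissible. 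Under this correspondence the hypothesis $\|f(T)\|\le 1$ translates directly into positivity of the check kernel, completing the proof. The technical hurdle is carefully matching the tensor structures on the two sides so that the admissibility condition and the operator-norm bound $\|\Delta(T)\|\le 1$ correspond exactly.
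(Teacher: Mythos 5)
Your overall architecture matches the paper's: (2)$\Leftrightarrow$(3) via Ball--Huam\'an, (3)$\Leftrightarrow$(4) via Theorem \ref{Ambrozie-Timotin} applied to $R_{sr}$ viewed as a polynomial polyhedron, and the hard direction handled by the dictionary between $\Delta|_{\Lambda}$-admissible kernels on a two-point set and generic tuples on $\C^2$ built from a Gram factorization $k(z_i,z_j)=\langle v_i,v_j\rangle$. That dictionary, together with the identity expressing $\tilde{k}_{X,\Delta|_{\Lambda}}$ as a quadratic form in $I-\Delta(T)^*\Delta(T)$, is exactly the paper's argument for (1)$\Rightarrow$(2), and your observation that the Gram matrix of the eigenvectors of any admissible $T$ is itself an admissible kernel is precisely the paper's proof of (2)$\Rightarrow$(1). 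The only genuinely different choice you make is to close the loop via (3)$\Rightarrow$(1) with a transfer-function/defect-identity computation rather than via (2)$\Rightarrow$(1) with kernels; that route is workable (and the $r<1$ regularization you worry about is actually unnecessary, since in the eigenbasis decomposition $\Delta_{\E}(T)(A\otimes I_2)$ is block-diagonal with blocks $(I_{\E}\otimes\Delta(z_j))A$ of norm $<1$, so the resolvent exists outright), but it buys nothing over the shorter kernel argument.

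There is one concrete gap in your (1)$\Rightarrow$(2). You assert that admissibility of $k$ is ``precisely the condition for a Cholesky-type factorization \dots to yield a basis $(v_1,v_2)$ of $\C^2$.'' This is false for degenerate admissible kernels: $k$ is only required to be positive \emph{semidefinite}, and if the $2\times 2$ matrix $[k(z_i,z_j)]$ is singular (e.g.\ rank one, or identically zero), the Gram vectors $v_1,v_2$ are linearly dependent, no generic tuple $T$ with $\sigma(T)=\Lambda$ can be built from them, and your dictionary breaks down. Since membership in $SA_{\Delta|_{\Lambda}}$ requires positivity of $\check{k}_f$ for \emph{every} admissible $k$, these degenerate kernels cannot be ignored. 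The paper patches this by first proving the claim for positive definite admissible $k$, then noting that $\varepsilon\delta_{i,j}+k(z_i,z_j)$ is admissible and positive definite for every $\varepsilon>0$ and passing to the limit $\varepsilon\to 0$. Your proposal needs this (or an equivalent) regularization step; with it added, the argument is complete.
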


%%%%%%%%%%%%%%%%%%%%%%%%%%%%%5

%%%%%%%%%%%%%%%%%%%%%%%%proof of theorem 1

\begin{proof}(2)$\Leftrightarrow$(3) follows from \cite[Theorem 3.1]{BH2013} (cf. \cite[Theorem 2.18, Theorem 2,20 and Remark 2.21]{BMV}).

Since $R_{sr}$ is the polynomial polyhedron associated with $\delta(z)=[\zeta^{i,j}]_{1\le i\le s,  1\le j \le r}$, Theorem \ref{Ambrozie-Timotin} implies (3)$\Leftrightarrow$(4).

%%%%%%%%%%%%%%%%%%%%%%%%%%%%%%%%(1)->(2)

(1)$\Rightarrow$(2). First, let $k:\Lambda\times\Lambda\rightarrow\C$ be a $\Delta|_{\Lambda}$-admissible positive
 {\bf definite} function (so it defines an invertible matrix $[k(z_i, z_j)]_{i,j=1}^2$). Thus, there exist 2 linearly independent vectors $v_1$ and $v_2\in\C^2$ such that $k(z_i,z_j)=\langle v_i, v_j\rangle_{\C^2}$ ($i=1, 2$). Since $\{v_1, v_2\}$ is a basis for $\C^2$, we can define the pairwise commuting operators $T=(T^1,\ldots,T^d)$ acting on $\C^2$ with $\sigma(T)=\Lambda$
 by equation (\ref{def1}). If $\xi$ is a vector in $\C^r$, we can view $\xi$ as an operator from $\C$ to $\C^r$. The adjoint operator $\xi^*:\C^r\rightarrow\C$ is then given by $\xi^*: \eta\mapsto\langle \eta, \xi\rangle_{\C^r}$. 
 For any pair $\xi_1, \xi_2\in\C^r$, we define $X:\Lambda\rightarrow\mathcal {C}_2(\C,\C^r)$ by $X(z_i):=\xi_i$ ($i=1, 2$). Then, we have
\begin{align}\label{equation1}
\tilde{k}_{X, \Delta |_{\Lambda}}(z_i,z_j)&=X(z_j)^*(I-\Delta(z_j)^*\Delta(z_i))X(z_i)k(z_i,z_j)\notag\\
&=\langle\xi_i\otimes v_i, \xi_j\otimes v_j\rangle-\langle(\Delta(z_i)\xi_i)\otimes v_i, (\Delta(z_j)\xi_j)\otimes v_j\rangle\notag\\
&=\langle\xi_i\otimes v_i, \xi_j\otimes v_j\rangle-\langle\Delta(T)(\xi_i\otimes v_i), \Delta(T)(\xi_j\otimes v_j)\rangle\notag\\
&=\langle (I-\Delta(T)^*\Delta(T))\xi_i\otimes v_j, \xi_j\otimes v_j\rangle.
\end{align}
Since $k$ is $\Delta|_{\Lambda}$-admissible, $\tilde{k}_{X, \Delta|_{\Lambda}}$ is positive semidefinite. So, we conclude that $I-\Delta(T)^*\Delta(T)\ge 0$. Therefore, we have $\|\Delta(T)\|\le 1$, and the definition of $S_{2,gen}(B_{\Delta})$ implies $I-f(T)^*f(T)\ge 0$. This is equivalent to that the function $\check{k}_f(z_i,z_j)=(1-f(z_j)^*f(z_i))k(z_i,z_j)$ is positive semidefinite. 

Next, let $k$ be a general $\Delta|_{\Lambda}$-admissible positive semidefinite function. For any $\varepsilon>0$, $\varepsilon\delta_{i,j}+k(z_i,z_j)$ is a $\Delta|_{\Lambda}$-admissible positive definite function, where $\delta_{i,j}$ is the Kronecker delta. We have already proved that $\check{k}_{\varepsilon, f}(z_i, z_j):=(1-f(z_j)^*f(z_i))(\varepsilon\delta_{i,j}+k(z_i, z_j))$ is positive semidefinite. Thus, we may take the limit as $\varepsilon\rightarrow 0$ to obtain that $\check{k}_f(z_i, z_j)=(1-f(z_j)^*f(z_i))k(z_i, z_j)$ is a positive semidefinite function. Therefore, $f\in SA_{\Delta|_{\Lambda}}$ holds for any $\Lambda=\{z_1, z_2\}\subset B_{\Delta}$.

%%%%%%%%%%%%%%%%%%%%%%%%%%%%(2)->(1)

(2)$\Rightarrow$(1). Let $T$ be a generic tuple with the Taylor joint spectrum $\Lambda:=\sigma(T)=\{z_1, z_2\}\subset B_{\Delta}$ and $\|\Delta(T)\|\le 1$. Let $v_1$, $v_2\in\C^2$ be corresponding joint eigenvectors. Set $k(z_i, z_j):=\langle v_i, v_j\rangle_{\C^2}$. Then, equation (\ref{equation1}) implies that $k$ is a $\Delta|_{\Lambda}$-admissible positive semidefinite function. Since  
$f\in SA_{\Delta|_{\Lambda}}$, 
$\check{k}_f(z_i, z_j)=(1-f(z_j)^*f(z_i))k(z_i, z_j)$ is a positive semidefinite function and this is equivalent to $I-f(T)^*f(T)\ge 0$. 
Thus, we conclude that $f\in S_{2, gen}(B_{\Delta})$.
 \end{proof}

%%%%%%%%%%%%%%%%%%%%%%%%%%%%%

%%%%%%%%%%%%%%%%%%%%%%%%%%%%%% Remark
%\begin{remark}
%\upshape This result is a ``two point version" of \cite[Theorem 3.11]{AM201
%\end{remark}

%%%%%%%%%%%%%%%%%%%%%%%%%%%%%%%Theorem1

%%%%%%%%%%%%%%%%%%%%%%%%%%%%%5{Schwarz^pisk}

As a corollary of this theorem, we obtain a Schwarz-Pick type inequality for $S_{2, gen}(B_{\Delta})$.
We define a function $d_{\Delta}:B_{\Delta}\times B_{\Delta}\rightarrow\mathbb{R}_{\ge 0}$ by
\begin{align*}
d_{\Delta}(z,w):=&\left\|(I-\Delta(w)\Delta(w)^*)^{-\frac{1}{2}}(\Delta(z)-\Delta(w))\right.\\
&\;\;\;\;\;\;\left.\times(I-\Delta(w)^*\Delta(z))^{-1}(I-\Delta(w)^*\Delta(w))^{\frac{1}{2}} \right\|_{B(\C^r, \C^s)}.
\end{align*}
We point out that Harris \cite{Har} characterized the automorphisms of $R_{sr}$, and it is easy to see that 
\begin{equation*}
d_{\Delta}(z,w)=d_{R_{sr}}(\Delta(z),\Delta(w)).
\end{equation*}
In fact, for any $\zeta=(\zeta^{1, 1},\ldots,\zeta^{s,r})$ and $\eta=(\eta^{1, 1},\ldots,\eta^{s,r})$ in $R_{sr}$, identified with $Z=\begin{bmatrix}
\zeta^{1,1}&\cdots&\zeta^{1,r}\\
\zeta^{2, 1}&\cdots&\zeta^{2,r}\\
\vdots&\ddots&\vdots\\
\zeta^{s,1}&\cdots&\zeta^{s,r}
\end{bmatrix}$ and 
$H=\begin{bmatrix}
\eta^{1,1}&\cdots&\eta^{1,r}\\
\eta^{2, 1}&\cdots&\eta^{2,r}\\
\vdots&\ddots&\vdots\\
\eta^{s,1}&\cdots&\eta^{s,r}
\end{bmatrix}$ respectively, we have
\begin{equation*}
d_{R_{sr}}(\zeta,\eta):=\left\|(I-HH^*)^{-\frac{1}{2}}(Z-H)(I-H^*Z)^{-1}(I-H^*H)^{\frac{1}{2}} \right\|_{B(\C^r, \C^s)}.
\end{equation*}

%%%%%%%%%%%%%%%%%%%%%%%%%%%%%%Cor

\begin{cor}\label{cor1}
If $f\in S_{2, gen}(B_{\Delta})$ and $z$, $w\in B_{\Delta}$, then
\begin{equation}\label{eq:Schwarz-Pick}
\dd(f(z), f(w))\le d_{\Delta}(z, w).
\end{equation}
\end{cor}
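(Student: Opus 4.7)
The plan is to exploit the subordination statement (4) of Theorem \ref{Theorem1}, which turns an arbitrary $f \in S_{2,gen}(B_{\Delta})$ into a bona fide Schur-Agler function on a classical domain, and then apply the standard Schwarz-Pick contractivity of the M\"obius pseudo-distance. Fix $z, w \in B_{\Delta}$. The case $z = w$ is trivial, so assume $z \ne w$ and take $\Lambda = \{z, w\}$. By the implication (1)$\Rightarrow$(4) of Theorem \ref{Theorem1}, there exists $F \in SA(R_{sr})$ such that $f(z) = F(\Delta(z))$ and $f(w) = F(\Delta(w))$.

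Next, since $SA(R_{sr}) \subset H(R_{sr}, \overline{\D})$ (an inclusion already recorded for general polynomial polyhedra in section 2), the function $F$ is holomorphic on $R_{sr}$ with values in $\overline{\D}$. I would now invoke the elementary fact that, by the very definition of the M\"obius pseudo-distance as a supremum over $H(R_{sr}, \D)$, every $G \in H(R_{sr}, \D)$ satisfies
\begin{equation*}
\dd(G(a), G(b)) \le d_{R_{sr}}(a, b) \qquad (a, b \in R_{sr}).
\end{equation*}
Combined with the identity $d_{\Delta}(z, w) = d_{R_{sr}}(\Delta(z), \Delta(w))$ recorded just before the corollary (a consequence of Harris' classification of the automorphisms of the Cartan domain $R_{sr}$), applying this bound to $G = F$ at the points $a = \Delta(z)$, $b = \Delta(w)$ yields
\begin{equation*}
\dd(f(z), f(w)) = \dd(F(\Delta(z)), F(\Delta(w))) \le d_{R_{sr}}(\Delta(z), \Delta(w)) = d_{\Delta}(z, w),
\end{equation*}
which is exactly \eqref{eq:Schwarz-Pick}.

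The one genuine wrinkle is that $F$ is only known to take values in the \emph{closed} disk $\overline{\D}$, so strictly speaking $F \notin H(R_{sr}, \D)$ in general. The hard part of the write-up is therefore disposing of this boundary issue. The cleanest remedy is the maximum modulus dichotomy: either $F$ is constant, in which case the left-hand side of \eqref{eq:Schwarz-Pick} vanishes and the inequality is automatic, or else the open mapping principle forces $|F| < 1$ throughout $R_{sr}$ and $F$ legitimately lies in $H(R_{sr}, \D)$. An alternative, slightly more robust route is to apply the Schwarz-Pick bound to the rescaled function $rF \in H(R_{sr}, \D)$ for $r \in (0, 1)$ and let $r \nearrow 1$, using continuity of $\dd$ on $\overline{\D} \times \overline{\D}$; this avoids invoking the maximum principle altogether. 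Either way, no further computation is needed, because the quantitative content of the inequality is entirely absorbed into the Harris identity for $d_{R_{sr}}$.
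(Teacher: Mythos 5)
Your proposal is correct and follows essentially the same route as the paper: apply Theorem \ref{Theorem1}(1)$\Rightarrow$(4) to obtain $F\in SA(R_{sr})$ with $f(z)=F(\Delta(z))$, $f(w)=F(\Delta(w))$, use the identity $d_{\Delta}(z,w)=d_{R_{sr}}(\Delta(z),\Delta(w))$, and dispose of the boundary case via the maximum modulus principle. The paper's proof is exactly this argument (your rescaling alternative is a harmless extra).
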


\begin{proof}
Since $f\in S_{2, gen}(B_{\Delta})$, Theorem \ref{Theorem1}(1)$\Rightarrow$(4) implies that there exists an $F\in SA(R_{sr})$ such that 
$f(z)=F(\Delta(z))$ and $f(w)=F(\Delta(w))$. If $|F(\Delta(z))|=1$ or $|F(\Delta(w))|=1$, then we apply the maximum modulus principle to getting (\ref{eq:Schwarz-Pick}). Otherwise, we have
\begin{equation*}
\dd(f(z), f(w))=\dd(F(\Delta(z)), F(\Delta(w)))\le d_{R_{sr}}(\Delta(z), \Delta(w))=d_{\Delta}(z, w).
\end{equation*}
Here, the above inequality follows from the definition of the M$\ddot{\mathrm{o}}$bius pseudo-distance $d_{R_{sr}}$.
\end{proof}

%%%%%%%%%%%%%%%%%%%%%%%%%%Cor1

%%%%%%%%%%%%%%%%%%%%%%%%% Remark

\begin{remark}
\upshape Theorem \ref{Theorem1} is a ``two points analogue" of \cite[Theorem 3.11]{AM2015oka}. Agler and McCarthy \cite[Proposition 2.6]{AM2015oka} proved their Schwarz-Pick type inequality in the case when $\Delta$ is a diagonal matrix of holomorphic functictions and use it to prove \cite[Theorem 3.11]{AM2015oka}. Although, we do not know how to apply their proof of \cite[Proposition 2.6]{AM2015oka} to our general setting, we will use their ideas to obtain some dilation results in the next section.
\end{remark}

%%%%%%%%%%%%%%%%%%%%%%%%%% end{Remark}

%%%%%%%%%%%%%%%%%%%%%%%%%\begin{examples}

%%%%%%%%%%%%%%%%%%%%%%%% end{examples}

\begin{examples}\upshape

(1) (Schur-Agler class) If $f\in SA(B_{\delta})$, then 
\begin{equation*}
\dd(f(z), f(w))\le d_{\delta}(z,w) \;\;(z,w\in B_{\delta}).
\end{equation*}
In the next section, we will see that this inequality is the best possible for $SA(B_{\delta})$.

(2) (Schur class of $\D^d$) If $f\in H(\D^d,\overline{\D})=S_2(B_{\delta_d})$, then 
\begin{equation*}
\dd(f(z), f(w))\le \|diag(\dd(z^1, w^1),\ldots, \dd(z^d, w^d))\|=d_{\D^d}(z,w)\;\;(z,w\in\D^d).
\end{equation*}
This is a well-known fact. We recall that if $d\ge 3$, then $SA(B_{\delta_d})\subsetneq H(\D^d, \overline{\D})$. Therefore, it is reasonable for considering $S_2(B_{\Delta})$ rather than the Schur-Agler class in order to study Schwarz-Pick type inequalities.

(3) (Douglas-Paulsen class) For $r\in\mathbb{R}$ with $0<r<1$, we define a matrix of holomorphic functions on $\C^{\times}:=\C\backslash\{0\}$ by $a_r(z):=\mathrm{diag}(z, \frac{r}{z})$. Then, it defines the annulus $A(r)=\{z\in\mathbb{C}\;|\;r<|z|<1\}$. Let $\mathscr{S}(\mathcal{F}_{dp}(r))$ be the Douglas-Paulsen class \cite[Section 9.4]{AMY}. By \cite[Theorem 9.54]{AMY}, we have $\mathscr{S}(\mathcal{F}_{dp}(r))\subset S_2(B_{a_r})$. Therefore, every $f\in\mathscr{S}(\mathcal{F}_{dp}(r))$ must satisfy:
\begin{equation*}
\dd(f(z), f(w))\le\max\{\dd(z,w), \dd\left(\frac{r}{z}, \frac{r}{w}\right)\}\;\;(z,w\in A(r)).
\end{equation*}
In section 5, we will see that $d_{a_r}(z,w)=\max\{\dd(z,w), \dd\left(\frac{r}{z}, \frac{r}{w}\right)\}$ does not coincide with the M$\ddot{\mathrm{o}}$bius pseudo-distance $d_{A(r)}(z,w)$ in general.

\end{examples}

%%%%%%%%%%%%%%%%%%%%%%%%%%% Section 5

\section{Dilations of $2\times2$ matrices and geometry}

Here, we study possible relations between geometry with respect to $d_{\Delta}$ and analysis of $2\times 2$ matrices. In the rest of this paper, we assume that $\Delta$ is a matrix of holomorphic functions and $B_{\Delta}\subset\C^d$ is connected. 

 We begin by proving that inequality (\ref{eq:Schwarz-Pick}) is the best possible estimate for $S_2(B_{\Delta})$.
 \begin{prop}\label{prop1}
 For any pair $z, w\in B_{\Delta}$, there exists an $f\in S_2(B_{\Delta})$ such that 
 \begin{equation*}
 d_{\Delta}(z,w)=\dd(f(z), f(w)).
 \end{equation*}
 In particular, 
 \begin{equation}\label{eq:sup}
 d_{\Delta}(z, w)=\sup_{f\in S_2(B_{\Delta})}\dd(f(z), f(w)).
 \end{equation}
 Moreover, if $\delta$ is a matrix of polynomials, then 
 \begin{equation*}
 d_{\delta}(z,w)=\sup_{f\in S_2(B_{\delta})}\dd(f(z), f(w))=\sup_{f\in SA(B_{\delta})}\dd(f(z), f(w)).
 \end{equation*}
 \end{prop}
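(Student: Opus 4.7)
By Corollary \ref{cor1}, every $f \in S_2(B_{\Delta}) \subset S_{2, gen}(B_{\Delta})$ satisfies $\dd(f(z), f(w)) \le d_{\Delta}(z,w)$, so the only thing to verify for the first assertion is the existence of a single $f \in S_2(B_{\Delta})$ at which equality holds. The identity $d_{\Delta}(z,w) = d_{R_{sr}}(\Delta(z), \Delta(w))$ together with the implication (4)$\Rightarrow$(1) of Theorem \ref{Theorem1} reduces this to exhibiting one Schur-Agler function $F \in SA(R_{sr})$ that realizes the M\"obius pseudo-distance on $R_{sr}$ between $\Delta(z)$ and $\Delta(w)$: then $f := F \circ \Delta$ is holomorphic on $B_{\Delta}$ and lies in $S_{2, gen}(B_{\Delta})$ by that theorem, hence in $S_2(B_{\Delta})$.

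To build $F$, I set $A := \Delta(w) \in R_{sr}$ and invoke Harris's description of the automorphisms of $R_{sr}$ to obtain the M\"obius automorphism
\begin{equation*}
\psi_A(Y) = (I - AA^*)^{-\frac{1}{2}}(Y - A)(I - A^*Y)^{-1}(I - A^*A)^{\frac{1}{2}},
\end{equation*}
which satisfies $\psi_A(A) = 0$ and $\|\psi_A(\Delta(z))\| = d_{R_{sr}}(\Delta(z), A) = d_{\Delta}(z,w)$. Choosing unit vectors $u \in \C^r$, $v \in \C^s$ at which $\psi_A(\Delta(z))$ attains its operator norm and setting $F(Y) := \langle \psi_A(Y)u, v\rangle$ for $Y \in R_{sr}$ produces a holomorphic $F : R_{sr} \to \D$ with $F(A) = 0$ and $F(\Delta(z)) = d_{\Delta}(z,w)$, so $\dd(F(\Delta(z)), F(\Delta(w))) = d_{\Delta}(z,w)$ as required.

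The main obstacle will be verifying that $F \in SA(R_{sr})$. My plan is to construct an explicit transfer-function realization of $F$ of the form required by Theorem \ref{Ambrozie-Timotin} with respect to the defining matrix $\delta_{R_{sr}}(Z) = Z$. The classical Julia/Halmos block unitary built from the defect operators $(I - AA^*)^{1/2}$, $(I - A^*A)^{1/2}$ and the blocks $A$, $A^*$ is a unitary colligation whose characteristic operator function is precisely $\psi_A$; scalarizing its input channel by $u$ and its output channel by $v$ then yields a scalar unitary colligation whose transfer function is $F$, after which Theorem \ref{Ambrozie-Timotin} places $F$ in $SA(R_{sr})$.

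For the polynomial case, $SA(B_{\delta}) \subset S_2(B_{\delta})$ follows by restricting the transfer-function realization supplied by Theorem \ref{Ambrozie-Timotin} to any two points of $B_{\delta}$ and applying the implication (3)$\Rightarrow$(1) of Theorem \ref{Theorem1}. Conversely, substituting the polynomial matrix $\delta$ for $Z$ in the transfer-function realization of $F$ above produces a transfer-function realization of $f = F \circ \delta$ with respect to $\delta$, so $f \in SA(B_{\delta})$ as well; combined with Corollary \ref{cor1}, all three suprema in the last display collapse to $d_{\delta}(z,w)$.
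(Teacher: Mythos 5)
Your proposal is correct in substance and in fact constructs the very same extremal function as the paper: with $A=\Delta(w)$, both proofs take $f=\langle \psi_A(\Delta(\cdot))u,v\rangle$ for norming unit vectors $u,v$ of $\psi_A(\Delta(z))$. The difference lies in how membership in $S_2(B_{\Delta})$ is verified. The paper stays entirely on the operator side: for a generic $T$ with $\|\Delta(T)\|\le 1$ it forms $g(T)=\psi_{\Delta(w)\otimes I_2}(\Delta(T))$ and checks $I-g(T)^*g(T)\ge 0$ directly via Harris's identity, so $\|f(T)\|\le\|g(T)\|\le 1$; this is self-contained and does not use Theorem \ref{Ambrozie-Timotin} at all for the first assertion. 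You instead show $F=\langle\psi_A(\cdot)u,v\rangle\in SA(R_{sr})$ by a colligation realization and then apply Theorem \ref{Theorem1}(4)$\Rightarrow$(1); this is conceptually tidy (the extremal function is exhibited once and for all as the pullback of a Schur--Agler function on the Cartan domain, which you then reuse for the ``moreover'' part), but contains one technical inaccuracy: compressing the Julia unitary built from $A$, $A^*$ and the defect operators by the isometry $u$ on the input channel and the coisometry $v^*$ on the output channel yields a \emph{contractive}, not unitary, colligation (for $s\ne r$ it maps $\C^s\oplus\C$ to $\C^r\oplus\C$ and cannot be unitary for dimension reasons). This is harmless --- the transfer function of a contractive colligation is still in the Schur--Agler class, and one can recover a unitary realization by the standard lurking-isometry argument --- but the appeal to condition (2) of Theorem \ref{Ambrozie-Timotin} as stated needs that extra remark. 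In the polynomial case your argument is essentially the paper's; the only divergence is that the paper bounds $\sup_{f\in S_2(B_{\delta})}\dd(f(z),f(w))$ by replacing an \emph{arbitrary} $f\in S_2(B_{\delta})$ with $F\circ\delta\in SA(B_{\delta})$ via Theorem \ref{Theorem1}(1)$\Rightarrow$(4), whereas you only need the single extremal function to lie in $SA(B_{\delta})$ and then invoke Corollary \ref{cor1}; both routes are valid.
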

 \begin{proof}
 
 Let $z$ and $w$ be any points in $B_{\Delta}$. Define a $B(\C^r,\C^s)$-valued holomorphic function $g_w$ on $B_{\Delta}$ by 
 \begin{equation*}
 g_w(\zeta):=(I-\Delta(w)\Delta(w)^*)^{-\frac{1}{2}}(\Delta(\zeta)-\Delta(w))(I-\Delta(w)^*\Delta(\zeta))^{-1}(I-\Delta(w)^*\Delta(w))^{\frac{1}{2}} 
 \end{equation*}
 Since $\Delta(z)$ and $\Delta(w)$ act on a finite-dimensional Hilbert space, there exist unit vectors $\xi\in\C^r$ and $\eta\in\C^s$ such that
 \begin{equation*}
 |\langle g_w(z)\xi, \eta\rangle_{\C^s}|=d_{\Delta}(z,w).
 \end{equation*}
 We define a holomorphic function on $B_{\Delta}$ by
 \begin{equation*}
 f_{z,w}(\zeta):=\langle g_w(\zeta)\xi, \eta\rangle\;\;\;\;(\zeta\in B_{\Delta}).
 \end{equation*}
 Then, it is easy to see that $\dd(f_{z,w}(z), f_{z,w}(w))=d_{\Delta}(z, w)$. Thus, it remains to prove $f_{z,w}\in S_2(B_{\Delta})$. If $T$ is generic such that $\sigma(T)\subset B_{\Delta}$ and $\|\Delta(T)\|\le 1$, then the Neumann series expansion gives
 \begin{equation*}
 f_{z,w}(T)=(\eta^*\otimes I_2)g(T)(\xi\otimes I_2).
 \end{equation*}
 Here, $I_2$ is the $2\times 2$ identity matrix,
 \begin{align*}
 g(T)=&(I-\Delta(W)\Delta(W)^*)^{\frac{1}{2}}(\Delta(T)-\Delta(W))\\
 &\;\;\;\;\;\;\times(I-\Delta(W)^*\Delta(T))^{-1}(I-\Delta(W)^*\Delta(W))^{\frac{1}{2}}
 \end{align*}
 and the operator $\Delta(W)\in B(\C^r\otimes\C^2, \C^s\otimes\C^2)$ is given by $\Delta(W):=\Delta(w)\otimes I_2$.
 The same argument as in the proof of \cite[Theorem 2]{Har} with replacing $A$ and $B$ there with $\Delta(T) $ and $-\Delta(W)$ respectively will show that
 \begin{align*}
 I-g(T)^*g(T)=&(I-\Delta(W)^*\Delta(W))^{\frac{1}{2}}(I-\Delta(T)^*\Delta(W))^{-1}(I-\Delta(T)^*\Delta(T))\\
 &\;\;\times(I-\Delta(W)^*\Delta(T))^{-1}(I-\Delta(W)^*\Delta(W))^{\frac{1}{2}}\ge 0
 \end{align*}
 (see also \cite[equation (4.2)]{McT}). Hence, we have $\|g(T)\|\le 1$. Since $\|\xi\|=\|\eta\|= 1$, we conclude $\|f_{z,w}(T)\|\le 1$. Therefore, $f_{z,w}\in S_2(B_{\Delta})$.
 
 Let $\delta$ be a matrix of polynomials. By the transfer-function realization, we have $SA(B_{\delta})\subset S_2(B_{\delta})$. Therefore, it remains to see that
 \begin{equation*}
 \sup_{f\in S_2(B_{\delta})}\dd(f(z), f(w))\le\sup_{f\in SA(B_{\delta})}\dd(f(z), f(w)) .
 \end{equation*}
For any $f\in S_2(B_{\delta})$, Theorem \ref{Theorem1}(1)$\Rightarrow$(4) implies that there exists an $F\in SA(R_{sr})$ such that $f(z)=F(\delta(z))$ and $f(w)=F(\delta(w))$. The transfer-function realization shows again that the holomorphic function  $g(\zeta):= F(\delta(\zeta))$ falls in $SA(B_{\delta})$. Then, we have
\begin{equation*}
\dd(f(z), f(w))=\dd(F(\delta(z)), F(\delta(w)))=\dd(g(z), g(w)).
\end{equation*}
This implies
\begin{equation*}
 \sup_{f\in S_2(B_{\delta})}\dd(f(z), f(w))\le\sup_{f\in SA(B_{\delta})}\dd(f(z), f(w)).
 \end{equation*} \end{proof}
 
 \begin{remark}
 \upshape This result implies that $d_{\Delta}$ is a pseudo-distance on $B_{\Delta}$ because $\dd$ enjoys the triangle inequality. 
 \end{remark}
 
 If a domain $\Omega$ is bounded, then the M$\ddot{\mathrm{o}}$bius pseudo-distance $d_{\Omega}$ (hence the Carath\'{e}odory pseudo-distance $C_{\Omega}$ too) is a distance on $\Omega$. In general, $d_{\Delta}$ is not a distance of a bounded domain $B_{\Delta}$ (e.g. $E=\C$ and $\Delta=[z^2]$). However, if  $B_{\Delta}$ is bounded with respect to the generic matrices, then $d_{\Delta}$ becomes a distance on $B_{\Delta}$.
 
%%%%%%%%%%%%%%%%%%%%%%%%%%% def of 2 bounding

\begin{definition}
$B_{\Delta}$ is said to be 
{\bf 2-bounding} if 
\begin{equation*}
\sup\{\|T^r\|\;|\; 1\le r\le d,\;\text{$T$ is generic with $\sigma(T)\subset B_{\Delta}$ and $\|\Delta(T)\|\le1$}\}<\infty.
\end{equation*}
\end{definition}

%%%%%%%%%%%%%%%%%%%%%%%%%% end def of 2 bounding

%%%%%%%%%%%%%%%%%%%%%%%%%% bounding => distance
\begin{prop}
If $B_{\Delta}$ is 2-bounding, then $d_{\Delta}$ is a distance on $B_{\Delta}$ i.e.,  $d_{\Delta}(z,w)=0$ implies $z=w$.
\end{prop}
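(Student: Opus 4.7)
The plan is to prove the contrapositive: if $z\neq w$ in $B_{\Delta}$, then $d_{\Delta}(z,w)>0$. My strategy is to exhibit, for any such pair, a function in $S_2(B_{\Delta})$ that separates $z$ and $w$ and takes values in $\D$, and then invoke Corollary \ref{cor1}.

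Since $z\neq w$, I would first choose a coordinate $r\in\{1,\ldots,d\}$ with $z^r\neq w^r$. Let $M$ be any constant strictly greater than the (finite, by the 2-bounding hypothesis) supremum in the definition of 2-bounding. I propose the rescaled coordinate function $f(\zeta):=\zeta^r/M$, which is a polynomial and hence holomorphic on $B_{\Delta}$. For every generic tuple $T$ with $\sigma(T)\subset B_{\Delta}$ and $\|\Delta(T)\|\le 1$, the 2-bounding bound gives $\|f(T)\|=\|T^r\|/M<1$, so $\|f\|_{2,gen}\le 1$ and therefore $f\in S_2(B_{\Delta})$. Applying the same estimate to the generic tuple $T=\mathrm{diag}(\zeta,\zeta')$ for any $\zeta'\in B_{\Delta}\setminus\{\zeta\}$ (which exists because $B_{\Delta}$ has more than one element) also shows $|f(\zeta)|<1$ for every $\zeta\in B_{\Delta}$, so in particular $f(z),f(w)\in\D$.

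Corollary \ref{cor1} then yields $\dd(f(z),f(w))\le d_{\Delta}(z,w)$. Since $f(z)=z^r/M\neq w^r/M=f(w)$ and both lie strictly inside the open unit disk, the pseudo-hyperbolic distance on the left-hand side is strictly positive, forcing $d_{\Delta}(z,w)>0$. I do not anticipate a serious obstacle here; the 2-bounding hypothesis is precisely the condition ensuring that a scalar rescaling brings each coordinate function into $S_2(B_{\Delta})$, after which Corollary \ref{cor1} transports separation in $B_{\Delta}$ to separation in $\D$.
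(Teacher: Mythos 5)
Your proof is correct. It shares the paper's key step --- the 2-bounding hypothesis is used exactly to place the rescaled coordinate functions $\zeta\mapsto\zeta^r/M$ into $S_2(B_{\Delta})$ --- but the way you convert point-separation into positivity of $d_{\Delta}$ is different. You argue by contraposition and invoke Corollary \ref{cor1}: since $f\in S_{2,gen}(B_{\Delta})$ separates $z$ and $w$ inside $\D$, the Schwarz--Pick inequality forces $d_{\Delta}(z,w)\ge \dd(f(z),f(w))>0$. The paper instead argues by contradiction: it first observes that $d_{\Delta}(z,w)=0$ forces $\Delta(z)=\Delta(w)$ (because the outer factors in the definition of $d_{\Delta}$ are invertible), then applies Theorem \ref{Theorem1}(1)$\Rightarrow$(4) to write $z^r=F_r(\Delta(z))$ and $w^r=F_r(\Delta(w))$, whence $z=w$. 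Your route is slightly more economical in that it reuses the already-established inequality (\ref{eq:Schwarz-Pick}) and avoids both the invertibility observation and the transfer-function realization; the paper's route makes explicit the structural fact that $d_{\Delta}$ degenerates exactly where $\Delta$ fails to separate points, which is perhaps more informative. One small remark: your detour through an auxiliary tuple $\mathrm{diag}(\zeta,\zeta')$ to verify $|f(\zeta)|<1$ is fine but could be shortened by taking $T=\mathrm{diag}(z,w)$ directly, which is generic since $z\ne w$ and simultaneously gives $|f(z)|<1$ and $|f(w)|<1$.
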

\begin{proof}
The idea is same as a part of the proof of \cite[Theorem 4.18]{AM2015oka}. Suppose that $z$ and $w$ are distinct points in $B_{\Delta}$ with $d_{\Delta}(z,w)=0$. Then, we have $\Delta(z)=\Delta(w)$. 
Since $B_{\Delta}$ is 2-bounding, we may assume that the coordinate functions $\chi^r(z):=z^r$ $(r=1,\ldots, d)$ are in $S_2(B_{\Delta})$. Theorem \ref{Theorem1}(1)$\Rightarrow$(4) yields $F_r\in SA(R_{sr})$ with
\begin{equation*}
z^r=F_r(\Delta(z)),\;\; w^r=F_r(\Delta(w))\;\;(r=1,\ldots,d).
\end{equation*}
As $\Delta(z)=\Delta(w)$, we have $z=w$, a contradiction.
\end{proof}

We have already seen, in Examples \ref{examples:2.2}(2), the product property for the M$\ddot{\mathrm{o}}$bius pseudo-distance. A similar property holds for our pseudo-distance $d_{\Delta}$.

%%%%%%%%%%%%%%%%%% prop{directsum}

\begin{prop}\label{prop3}
If $\Delta_1$ and $\Delta_2$ are matrices of holomorphic functions, then we have
\begin{equation*}
d_{\Delta_1\oplus\Delta_2}((z^1, z^2), (w^1, w^2))=\max\{d_{\Delta_1}(z^1, w^1), d_{\Delta_2}(z^2, w^2)\}
\end{equation*}
for any $(z^1, z^2), (w^1, w^2)\in B_{\Delta_1}\times B_{\Delta_2}=B_{\Delta_1\oplus\Delta_2}$. Here,
\begin{equation*}
(\Delta_1\oplus\Delta_2)(z^1, z^2):=
\begin{bmatrix}
\Delta_1(z^1)&0\\
0&\Delta_2(z^2)
\end{bmatrix}.
\end{equation*}
\end{prop}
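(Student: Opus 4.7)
The plan is to exploit the fact that the definition of $d_\Delta(z,w)$ is built purely out of block-preserving operations: additions, products, adjoints, positive-operator inverses, and positive-operator square roots. Since $(\Delta_1\oplus\Delta_2)(z_1,z_2)=\Delta_1(z_1)\oplus\Delta_2(z_2)$, every intermediate quantity in the expression defining $d_{\Delta_1\oplus\Delta_2}((z_1,z_2),(w_1,w_2))$ will be block diagonal with respect to the orthogonal decompositions $\mathbb{C}^{r_1+r_2}=\mathbb{C}^{r_1}\oplus\mathbb{C}^{r_2}$ and $\mathbb{C}^{s_1+s_2}=\mathbb{C}^{s_1}\oplus\mathbb{C}^{s_2}$. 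The conclusion will then follow from the elementary identity $\|A\oplus B\|=\max\{\|A\|,\|B\|\}$ for the operator norm.

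First I would record that $B_{\Delta_1\oplus\Delta_2}=B_{\Delta_1}\times B_{\Delta_2}$: this is immediate from $\|\Delta_1(z_1)\oplus\Delta_2(z_2)\|=\max\{\|\Delta_1(z_1)\|,\|\Delta_2(z_2)\|\}<1$ iff both factors are strict contractions. In particular, the proposition is asking about points $(z_i,w_i)\in B_{\Delta_i}\times B_{\Delta_i}$, so all expressions that follow are well-defined.

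Next I would check that direct sums commute with each operation used in the formula for $d_\Delta$. For the adjoint, product, and difference this is straightforward, giving
\begin{equation*}
I-(\Delta_1\oplus\Delta_2)(w)(\Delta_1\oplus\Delta_2)(w)^*=(I-\Delta_1(w_1)\Delta_1(w_1)^*)\oplus(I-\Delta_2(w_2)\Delta_2(w_2)^*),
\end{equation*}
and similarly for $I-(\Delta_1\oplus\Delta_2)(w)^*(\Delta_1\oplus\Delta_2)(w)$ and $I-(\Delta_1\oplus\Delta_2)(w)^*(\Delta_1\oplus\Delta_2)(z)$. Since the Borel functional calculus (in particular the positive square root and operator inverse) applied to a block-diagonal positive operator is performed blockwise, the inverse square roots and square roots also split. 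Assembling all the factors, the operator appearing inside the norm in the definition of $d_{\Delta_1\oplus\Delta_2}((z_1,z_2),(w_1,w_2))$ is exactly $M_1\oplus M_2$, where $M_i$ is the operator whose norm equals $d_{\Delta_i}(z_i,w_i)$.

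Finally, taking norms and using $\|M_1\oplus M_2\|=\max\{\|M_1\|,\|M_2\|\}$ (valid because the decomposition of domain and codomain is orthogonal) yields the claimed identity. There is no real obstacle: the one point to be mildly careful about is justifying that the positive square root and the inverse respect direct sums, but this is immediate from applying the functional calculus blockwise to a block-diagonal self-adjoint operator.
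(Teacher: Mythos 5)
Your proposal is correct and is precisely the ``direct calculation'' that the paper's one-line proof alludes to: every factor in the defining expression for $d_{\Delta_1\oplus\Delta_2}$ is block diagonal (the inverse and the positive square root act blockwise on a block-diagonal positive operator), so the operator inside the norm is $M_1\oplus M_2$ and $\|M_1\oplus M_2\|=\max\{\|M_1\|,\|M_2\|\}$ gives the claim. No gaps; this is the same approach as the paper, just written out.
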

\begin{proof}
This is due to a direct calculation.
\end{proof}

%%%%%%%%%%%%%%%%%% prop{directsum}

%We give an answer of ``The first Holy Grail of spectral set theory" \cite[Problem 1.48]{AMY} for diagonalizable commuting tuples operators acting on the two dimensional Hilbert space. Of course, this is quite special case.  However, Drury \cite{Dru1983} observed that every commuting tuples of 2-by-2 matrices can be approximated by diagonalizable commuting tuples. So the author hope that the present approach lead to studying dilation theory for general (non-diagonalizable) 2-by-2 matrices.

%\begin{problem}[The first Holy Grail of spectral set theory]\label{problem1}
%\upshape Given a compact set $K$ or a domain $\Omega$ in $\C^d$, find simple necessary and sufficient condition on a tuple $T$ for $K$ or $\Omega$ to be a spectral set or spectral domain respectively for $T$.
%\end{problem}

%``The second Holy Grail of spectral set theory" \cite[Problem 1.53]{AMY} in the two dimensional setting has already solved by Agler \cite{Agl1990-2} \cite[Proposition 8.21]{AMY} and Chu \cite{Chu}.

Next, we will give a dilation type result. We give a sufficient condition on a diagonalizable commuting tuple $T$ acting on $\C^2$ for $B_{\Delta}$ to be a complete spectral domain for $T$. Of course, this is quite special case.  However, Drury \cite{Dru1983} observed that every commuting tuple of $2\times 2$ matrices can be approximated by diagonalizable commuting tuples. So, we hope that the present approach leads to studying dilation theory for general (non-diagonalizable) $2\times 2$ matrices.

We recall the definition of (complete) spectral domains.
\begin{definition}
A domain $\Omega\subset\C^d$ is said to be a ({\bf complete}) {\bf spectral domain} for a commuting $d$-tuple $T$ of operators if $\sigma(T)\subset\Omega$ and, (resp.\;for every positive integer $n$ and) every holomorphic (resp.\;$n\times n$-matrix-valued) function $f$ on $\Omega$,
\begin{equation*}
\|f(T)\|\le\sup_{z\in\Omega}\|f(z)\|
\end{equation*}
holds.
\end{definition}

Let us say that a commuting $d$-tuple $T$ of operators acting on $\C^2$ is {\bf diagonalizable} if there exist linearly independent vectors $v_1$ and $v_2$ in $\C^2$ and a pair of points $z_1$, $z_2\in\C^d$ {\bf (which may not be distinct)} such that
\begin{equation*}
T^rv_j=z_j^rv_j
\end{equation*}
for $1\le r\le d$ and $j=1,2$. (Note that, joint eigenvalues of a generic tuple of operators are distinct.)

In the proof of the next dilation theorem, we crucially use Agler's deep observations on relations between operator theory and complex geometry \cite{Agl1990-2}.

%%%%%%%%%%%%%%%%%%%%%%%%% theorem 2

\begin{theorem}\label{theorem2}
Let $T$ be a diagonalizable commuting $d$-tuple of operators acting on $\C^2$ with $\sigma(T)=\{z,w\}\subset B_{\Delta}$. If $\|\Delta(T)\|\le 1$ and $d_{\Delta}(z,w)=d_{B_{\Delta}}(z,w)$, then $B_{\Delta}$ is a complete spectral domain for $T$. 
\end{theorem}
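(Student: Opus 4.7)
The plan is first to reduce to the case where $T$ is generic, then derive the scalar spectral domain property, and finally promote it to the complete spectral domain property via a two-point matrix Nevanlinna--Pick lifting governed by the extremal function supplied by Proposition \ref{prop1}.

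The degenerate case $z=w$ is trivial: a diagonalizable $d$-tuple on $\C^2$ with a single joint eigenvalue forces $T^r=z^r I_{\C^2}$ for every $r$, so for any matrix-valued holomorphic $f$ on $B_{\Delta}$ one has $f(T)=f(z)\otimes I_{\C^2}$ and $\|f(T)\|=\|f(z)\|\le\sup_{B_{\Delta}}\|f\|$. Henceforth I assume $z\ne w$, so that $T$ is generic in the sense of Section 3.

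For the scalar spectral property, $\|\Delta(T)\|\le 1$ together with the definition of $S_{2,gen}(B_{\Delta})$ gives $\|f(T)\|\le 1$ for every $f\in S_2(B_{\Delta})$. I would fix a Proposition \ref{prop1} extremal $\varphi\in S_2(B_{\Delta})$ with $\dd(\varphi(z),\varphi(w))=d_{\Delta}(z,w)$, which by hypothesis equals $d_{B_{\Delta}}(z,w)$; in particular $\varphi(T)$ is a contraction on $\C^2$. Given any $g\in H(B_{\Delta},\overline{\D})$, Schwarz--Pick on $B_{\Delta}$ forces $\dd(g(z),g(w))\le d_{B_{\Delta}}(z,w)=\dd(\varphi(z),\varphi(w))$, so scalar two-point Nevanlinna--Pick on $\D$ produces a holomorphic $\tau:\D\to\overline{\D}$ with $\tau\circ\varphi=g$ at $\{z,w\}$. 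Sz.--Nagy functional calculus for the contraction $\varphi(T)$ then yields $\|g(T)\|=\|\tau(\varphi(T))\|\le 1$, so $B_{\Delta}$ is already a (scalar) spectral domain for $T$.

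To upgrade to completeness, fix $F\in H(B_{\Delta},M_n(\overline{\D}))$ with $\sup_{B_{\Delta}}\|F\|\le 1$. The plan is to produce a holomorphic $\tau:\D\to M_n(\overline{\D})$ with $\|\tau\|_{\infty}\le 1$ interpolating $(\varphi(z),\varphi(w))\mapsto(F(z),F(w))$; matrix-valued Sz.--Nagy/Nagy--Foias calculus for the contraction $\varphi(T)$ on $\C^2$ would then deliver $\|F(T)\|=\|\tau(\varphi(T))\|\le 1$. Existence of $\tau$ is the classical matrix Nevanlinna--Pick problem on $\D$, solvable iff the $2n\times 2n$ Pick matrix with blocks $\bigl(I-F(w)^*F(z)\bigr)/\bigl(1-\overline{\varphi(w)}\varphi(z)\bigr)$ and companions is positive semidefinite.

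The main obstacle is exactly this matrix Pick positivity: the scalar extremality $\dd(\varphi(z),\varphi(w))=d_{B_{\Delta}}(z,w)$ does not formally transfer to the matrix-valued setting. This is where I would invoke Agler's extremal-function/spectral-set correspondence from \cite{Agl1990-2}, together with the Agler--McCarthy machinery of \cite{AM2015oka}. The point is that the simultaneous equality $d_{\Delta}(z,w)=d_{B_{\Delta}}(z,w)$ identifies $\varphi$ as a Carath\'eodory extremal whose associated 2-point model captures the full matrix-valued Pick structure on $B_{\Delta}$ at $\{z,w\}$; pulling this matrix Pick condition through $\varphi$ gives the required positivity on $\D$. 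Making this pullback precise, and verifying that the matrix Pick matrix inherits positivity from the generic matrix inequality satisfied by $F$ together with the hypothesis $\|\Delta(T)\|\le 1$, is the technical heart of the argument.
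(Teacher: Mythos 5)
Your reduction to the generic case and your argument for the \emph{scalar} spectral domain property are sound, and in fact they amount to a self-contained re-derivation of what the paper obtains by citing \cite{AMY}: the paper instead bounds $\dd(f(z),f(w))\le\sin\theta_T$ for all $f\in S_2(B_{\Delta})$ via \cite[Lemma 8.6 and eq.\ (8.11)]{AMY}, concludes $d_{B_{\Delta}}(z,w)=d_{\Delta}(z,w)\le\sin\theta_T$ from Proposition \ref{prop1}, and then invokes \cite[Theorem 8.13]{AMY}, whose proof is essentially your two-point Nevanlinna--Pick/von Neumann argument through the Carath\'eodory extremal $\varphi$. So up to that point the two routes are equivalent in substance.

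The genuine gap is the passage from spectral to \emph{complete} spectral domain. You correctly identify that your plan requires positivity of the $2n\times 2n$ block Pick matrix with entries $\bigl(I-F(z_j)^*F(z_i)\bigr)/\bigl(1-\overline{\varphi(z_j)}\varphi(z_i)\bigr)$, you correctly observe that this does not follow formally from the scalar extremality of $\varphi$, and then you defer it to an unspecified invocation of \cite{Agl1990-2} and \cite{AM2015oka}. That deferred step \emph{is} the theorem at this stage: establishing that the two-point matricial interpolation problem on $B_{\Delta}$ is governed by the same Carath\'eodory extremal as the scalar one is precisely the content of \cite[Proposition 8.21]{AMY}, which states that a spectral domain for a diagonalizable tuple acting on $\C^2$ is automatically a complete spectral domain for it. The paper closes the argument by citing exactly that result; your proposal neither cites it nor proves it, so as written the proof is incomplete at its decisive step. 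If you either quote \cite[Proposition 8.21]{AMY} or actually carry out the block Pick positivity (which is nontrivial and is where the restriction to $\C^2$ and to two interpolation nodes is used), the argument goes through.
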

\begin{proof}
	Let $T$ be a diagonalizable $d$-tuple of operators acting on $\C^2$ with $\sigma(T)=\{z,w\}\subset B_{\Delta}$ and satisfying $\|\Delta(T)\|\le 1$. We denote the angle between the eigenspaces of $T$ by $\theta_T$ (its precise definition is in \cite[Section 8.2]{AMY}). By \cite[Lemma 8.6]{AMY} and \cite[equation (8.11)]{AMY}, we have
\begin{equation*}
\dd(f(z), f(w))\le \sin\theta_T
\end{equation*}
for all $f\in S_2(B_{\Delta})$. %(if $z=w$, then $f(z)=f(w)$ and we have $\sin\theta_T=0$ \cite[Section8.1]{AMY}). 
Therefore, Proposition \ref{prop1} implies 
\begin{equation*}
d_{\Delta}(z,w)\le\sin\theta_T.
\end{equation*}
Since $d_{\Delta}(z,w)=d_{B_{\Delta}}(z,w)$, it follows from \cite[Theorem 8.13]{AMY} that $B_{\Delta}$ is a spectral domain for $T$. It is known that a spectral domain for a diagonalizable tuple of operators acting on $\C^2$ is also a complete spectral domain for it; see \cite[Proposition 8.21]{AMY}.
\end{proof}

%%%%%%%%%%%%%%%%%%%%%%%% theorem2

The next corollary is a geometric characterization of a domain, for which Drury's dilation type result \cite{Dru1983} holds. The corollary is Theorem \ref{theorem2} plus an additional equivalent condition.

\begin{cor}\label{cor3}
Let $B_{\Delta}$ be a domain in $\C^d$ associated with a matrix of holomorphic functions $\Delta$. Then the following conditions are equivalent:
\begin{enumerate}
\item $d_{\Delta}=d_{B_{\Delta}}$.
\item If a diagonalizable $d$-tuple $T$ of operators acting on $\C^2$ with $\sigma(T)\subset B_{\Delta}$ satisfies $\|\Delta(T)\|\le1$, then $B_{\Delta}$ is a complete spectral domain for $T$.
\item $S_2(B_{\Delta})=H(B_{\Delta}, \overline{\D})$.
\end{enumerate}
\end{cor}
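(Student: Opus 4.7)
The plan is to prove the cycle (1)$\Rightarrow$(2)$\Rightarrow$(3)$\Rightarrow$(1). Each step is a short deduction from material already developed; the analytic heavy lifting has been done by Theorem \ref{theorem2} and Proposition \ref{prop1}.

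\emph{(1)$\Rightarrow$(2).} Given a diagonalizable commuting $d$-tuple $T$ on $\C^2$ with $\sigma(T)\subset B_{\Delta}$ and $\|\Delta(T)\|\le 1$, split into two cases. If $\sigma(T)=\{z\}$ is a single point, diagonalizability forces $T^r=z^rI$ for every $r$, so $f(T)=f(z)\otimes I$ for any matrix-valued holomorphic $f$, and the complete spectral property is automatic. If $\sigma(T)=\{z,w\}$ with $z\ne w$, hypothesis (1) supplies $d_{\Delta}(z,w)=d_{B_{\Delta}}(z,w)$, so Theorem \ref{theorem2} applies directly to yield the conclusion.

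\emph{(2)$\Rightarrow$(3).} The inclusion $S_2(B_{\Delta})\subset H(B_{\Delta},\overline{\D})$ is built into the definition, so only the reverse inclusion needs attention. Let $f\in H(B_{\Delta},\overline{\D})$ and let $T$ be any generic commuting $d$-tuple on $\C^2$ with $\sigma(T)\subset B_{\Delta}$ and $\|\Delta(T)\|\le 1$. Generic tuples are in particular diagonalizable, so (2) makes $B_{\Delta}$ a complete (hence ordinary) spectral domain for $T$, giving $\|f(T)\|\le\sup_{z\in B_{\Delta}}|f(z)|\le 1$. Thus $f\in S_2(B_{\Delta})$.

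\emph{(3)$\Rightarrow$(1).} Start from Proposition \ref{prop1}, which expresses
\[ d_{\Delta}(z,w)=\sup_{f\in S_2(B_{\Delta})}\dd(f(z),f(w)). \]
Under (3) this sup runs over $H(B_{\Delta},\overline{\D})$. Because $B_{\Delta}$ is connected, the maximum modulus principle implies that every such $f$ either maps into $\D$ or is a unimodular constant; the unimodular constants contribute $0$ to the sup and may be discarded. The sup therefore collapses to $\sup_{f\in H(B_{\Delta},\D)}\dd(f(z),f(w))=d_{B_{\Delta}}(z,w)$, which is precisely (1).

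No real obstacle is hidden in this argument, since the analytic content is already packaged in Theorem \ref{theorem2} and Proposition \ref{prop1}; the proof is essentially organizational. The only delicate points are the degenerate eigenvalue case in (1)$\Rightarrow$(2) and the unimodular-constant case in (3)$\Rightarrow$(1), each of which is dispatched in a single line.
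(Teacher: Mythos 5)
Your proof is correct and follows essentially the same route as the paper: the cycle (1)$\Rightarrow$(2)$\Rightarrow$(3)$\Rightarrow$(1), with (1)$\Rightarrow$(2) from Theorem \ref{theorem2}, (2)$\Rightarrow$(3) by applying the complete spectral domain property to generic tuples, and (3)$\Rightarrow$(1) from Proposition \ref{prop1}. The only difference is that you spell out the degenerate single-eigenvalue case and the unimodular-constant case explicitly, which the paper leaves implicit.
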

\begin{proof}
(1)$\Rightarrow$(2) follows from Theorem \ref{theorem2}.

(2)$\Rightarrow$(3): Let $T$ be a generic $d$-tuple of operators with $\sigma(T)\subset B_{\Delta}$ and $\|\Delta(T)\|\le 1$. Then, $B_{\Delta}$ is a complete spectral domain for $T$. In particular, we have $\|f(T)\|\le 1$ for any $f\in H(B_{\Delta}, \overline{\D})$. Therefore, we conclude $S(B_{\Delta})=H(B_{\Delta}, \overline{\D})$.

(3)$\Rightarrow (1)$ follows from Proposition \ref{prop1}.
\end{proof}

%%%%%%%%%%%%%%%%%%%%%% cor3

%%%%%%%%%%%%%%%%%%%%%% cor4

Here is another corollary.

\begin{cor}\label{cor4}
Let $n_1,\ldots, n_d\in \mathbb{N}$ and let $\Delta_r$ be a matrix of holomorphic functions that defines a domain $B_{\Delta_r}\subset\C^{n_r}$ for each $r=1, \ldots, d$. Suppose that each $\Delta_r$ satisfies $d_{\Delta_r}=d_{B_{\Delta_r}}$. Then, whenever $T=(T^1,\ldots, T^d)$ is a diagonalizable $(n_1+\cdots +n_d)$-tuple of operators acting on $\C^2$ (each $T^r$ denotes an $n_r$-tuple of diagonalizable operators) such that $\sigma(T^r)\subset B_{\Delta_r}$ and $\|\Delta_r(T^r)\|\le 1$ $(r=1,\ldots, d)$, $B_{\Delta_1}\times\cdots\times B_{\Delta_d}=B_{\oplus_{r=1}^d\Delta_r}$ is a complete spectral domain for $T$.
\end{cor}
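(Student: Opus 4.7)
The plan is to reduce Corollary \ref{cor4} to a single application of Theorem \ref{theorem2} with the block matrix $\Delta := \oplus_{r=1}^{d} \Delta_r$ on the product domain $B_\Delta = B_{\Delta_1}\times\cdots\times B_{\Delta_d}$. I would first verify the non-distance hypotheses, and then verify the distance equality by combining two known product formulas.

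For the non-distance part: because $T$ is diagonalizable on $\C^2$, it admits a common basis $v_1, v_2$ of joint eigenvectors for all of its $n_1+\cdots+n_d$ coordinate operators. If the two joint eigenvalues coincide, $T$ is scalar and the conclusion is immediate, so I may assume $\sigma(T) = \{z, w\}$ with $z\ne w$. Writing $z = (z_1,\ldots,z_d)$ and $w = (w_1,\ldots,w_d)$ with $z_r, w_r \in \C^{n_r}$ the joint eigenvalues of $T^r$, the hypothesis $\sigma(T^r)\subset B_{\Delta_r}$ immediately gives $\sigma(T)\subset B_{\Delta_1}\times\cdots\times B_{\Delta_d} = B_\Delta$. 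Next, using the operator calculus defined in Section~3, $\Delta(T)$ acts block-diagonally on the decomposition of $(\C^{s_1}\oplus\cdots\oplus\C^{s_d})\otimes\C^2$ induced by the direct sum, with diagonal blocks $\Delta_r(T^r)$. Hence
\[
\|\Delta(T)\| = \max_{1\le r\le d}\|\Delta_r(T^r)\| \le 1.
\]

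For the distance equality, iterating Proposition \ref{prop3} yields
\[
d_\Delta(z,w) = \max_{1\le r\le d} d_{\Delta_r}(z_r, w_r),
\]
while the product formula for the M\"obius pseudo-distance recalled in the Examples of Section~2 (applied $d-1$ times) gives
\[
d_{B_\Delta}(z,w) = \max_{1\le r\le d} d_{B_{\Delta_r}}(z_r, w_r).
\]
The assumption $d_{\Delta_r} = d_{B_{\Delta_r}}$ for each $r$ makes the two right-hand sides coincide, so $d_\Delta(z,w) = d_{B_\Delta}(z,w)$. All three hypotheses of Theorem \ref{theorem2} are now in place, so $B_\Delta$ is a complete spectral domain for $T$.

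The argument is essentially bookkeeping; there is no real obstacle, only the need to observe that the block-diagonal structure of $\Delta$ transfers cleanly through both the operator calculus (so $\|\Delta(T)\|$ is a maximum of norms) and the pseudo-distance $d_\Delta$ (via Proposition \ref{prop3}). Once these two product identities are matched against the product formula for the M\"obius pseudo-distance, Theorem \ref{theorem2} does all of the remaining work.
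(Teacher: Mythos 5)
Your proof is correct and takes essentially the same route as the paper, whose entire proof is the citation of \cite[Theorem 4.9.1]{Kob} (the product formula for the M\"obius pseudo-distance), Proposition \ref{prop3}, and Corollary \ref{cor3} (whose implication (1)$\Rightarrow$(2) is exactly Theorem \ref{theorem2}). You have merely written out the bookkeeping the paper leaves implicit --- the block-diagonal identification giving $\|\Delta(T)\|=\max_r\|\Delta_r(T^r)\|$ and the matching of the two product formulas --- all of which checks out.
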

\begin{proof}
This follows from Proposition \ref{prop3}, Corollary \ref{cor3} and the direct product property of the M$\ddot{\mathrm{o}}$bius pseudo-distance (see Examples \ref{examples:2.2}(2)).
\end{proof}

%%%%%%%%%%%%%%%%%%%% cor4

%%%%%%%%%%%%%%%%%%%%%%example annul

\begin{examples}\upshape
(1) We generalize von Neumann's inequality for commuting $2\times 2$ row contractions \cite[Corollary 3.4]{HRS} and for commuting tuples of $2\times 2$ contractions \cite{Dru1983}. Let $T^1,\cdots, T^d$ be commuting row contractions acting on $\C^2$. 
Moreover, we assume that the entries of $T^r=(T_1^r,\ldots, T_{n_r}^r)$ commute with the entries of $T^s=(T_1^s,\ldots, T_{n_s}^s)$ for any $r\ne s\in\{1,\ldots, d\}$.
We denote the open unit ball in $\C^d$ by $\mathbb{B}_d$. We will prove that the closed polyball $\overline{\mathbb{B}_{n_1}}\times\cdots\times \overline{\mathbb{B}_{n_d}}\subset\C^{n_1+\cdots+n_d}$ is a complete spectral set for $T:=(T^1,\ldots, T^d)$. Here, a compact set $K$ is said to be a {\bf complete spectral set} for a commuting tuple of matrices $T$ if $\sigma(T)\subset K$ and, for every positive integer $n$ and every $n\times n$-matrix-valued bounded rational functions on $K$,
\begin{equation*}
\|r(T)\|\le\sup_{z\in K}\|r(z)\|
\end{equation*}
holds. 

Since each $T^r$ is a row contraction, we have $\sigma(T^r)\subset\overline{\mathbb{B}_{n_r}}$ ($r=1,\ldots, d$) (see e.g., \cite{HRS}). Therefore, the spectral mapping property for the Taylor spectrum \cite{Cur} implies that $\sigma(T)\subset \overline{\mathbb{B}_{n_1}}\times\cdots\times\overline{\mathbb{B}_{n_d}}$. For each $r\in\{1,\ldots, d\}$, set $\Delta_r(z):=[z^1\cdots z^{n_r}]$. We note that $B_{\Delta_r}=\mathbb{B}_{n_r}$ and $d_{\Delta_r}=d_{\mathbb{B}_{n_r}}$ (see \cite{DW, Har}). Since every commuting tuple of operators acting on $\C^2$ can be approximated by diagonalizable tuples (see \cite{Dru1983}), we can choose a suitable sequence $\{\rho_n^r\}_{n=1}^{\infty}\subset (0,1)$ and diagonalizable tuples $\{T_n^r\}_{n=1}^{\infty}$ so that $\sigma(\rho_n^rT_n^r)\subset B_{\Delta_r}=\mathbb{B}_{n_r}$, $\|\Delta_r(\rho_n^rT_n^r)\|\le 1$ and $\rho_n^rT_n^r$ converges to $T^r$ as $n\rightarrow\infty$ for each $r$. 
By Corollary \ref{cor4}, $\mathbb{B}_{n_1}\times\cdots\times\mathbb{B}_{n_d}$ is a complete spectral domain for $(\rho_n^1T_n^1,\ldots, \rho_n^dT_n^d)$. Therefore, we can conclude that $\overline{\mathbb{B}_{n_1}}\times\cdots\times \overline{\mathbb{B}_{n_d}}$ is a complete spectral set for $T$.

(2) We show that $d_{a_r}$ does not coincide with $d_{A(r)}$. Recall that a $2\times 2$ matrix-valued function $a_r$ on $\C^{\times}:=\C\backslash\{0\}$ with $0<r<1$ is defined by $a_r(z):=\mathrm{diag}(z, \frac{r}{z})$. We note that $z\mapsto \frac{1}{\sqrt{r}}z$ is a biholomorphic map from $A(r)\;(=B_{a_r})$ onto $C(r):=\{z\in\C\;|\;\sqrt{r}<|z|<\frac{1}{\sqrt{r}}\}$ (n.b., it is well known that the annulus is essentially determined by the ratio of its inner radius and outer radius \cite[Theorem 14.22]{Rud}). It is easy to check that $d_{a_r}(\sqrt{r}, -\sqrt{r})=\displaystyle\frac{2\sqrt{r}}{1+r}$. Since the M$\ddot{\mathrm{o}}$bius distance is invariant under biholomorphic maps, we have $d_{A(r)}(\sqrt{r}, -\sqrt{r})=d_{C(r)}(1, -1)$. By \cite{Sim}, we can see that
\begin{equation*}
d_{A(r)}(\sqrt{r}, -\sqrt{r})=4\sqrt{r}\frac{\prod_{n=1}^{\infty}(1+r^{2n})^4}{\prod_{n=1}^{\infty}(1+r^{2n-1})^4}.
\end{equation*}
It is easy to see that $d_{a_r}(\sqrt{r}, -\sqrt{r})<d_{A(r)}(\sqrt{r}, -\sqrt{r})$ for any $0<r<\sqrt[3]{2}-1$. By Theorem \ref{cor3}, there exists a diagonalizable operator $T$ acting on $\C^2$ satisfying $\sigma(T)\subset A(r)$ and $r\le\|T\|\le 1$ such that $A(r)$ is not a spectral domain for $T$. In contrast, Pal and Tomar \cite{PT} gave such an example that is not diagonalizable.

\end{examples}

%%%%%%%%%%%%%%%%%%%%%%%%%%example annuli

\begin{remark}\upshape
We have known that a noncommutative analog of Schwarz lemma holds for the noncommutative Schur-Agler class \cite{Koj}. Thus, it is natural to consider a noncommutative analog of Schwarz-Pick lemma. Unfortunately, our present approach collapses in the noncommutative setting because the automorphisms of a noncommutative Cartan domain of type I are not transitive \cite{McT}.
\end{remark}

%We introduce a pseudo-distance with replacing $H(B_{\delta},\D)$ to $SSA(B_{\delta})$ in the definition of the Catath$\acute{e}$odory pseudo-distance. 

%%%%%%%%%%%%%%%%%%%%%%%%%%%%%%

%%%%%%%%%%%%%%%%%%%%%%%%%%%Acknowledgment

\section*{Acknowledgment}

The author acknowledges his supervisor Professor Yoshimichi Ueda for his encouragements. The author also acknowledges Professor John Edward McCarthy for his some comments, especially, concerning Remark \ref{remark}.

\end{document}